\documentclass[11pt,a4paper,reqno]{amsart}
\usepackage[T1]{fontenc}
\usepackage[utf8]{inputenc}
\usepackage{lmodern}
\usepackage{amsmath,amssymb,amsthm,mathrsfs}
\usepackage[left=30mm,right=30mm,top=28mm,bottom=29mm,headsep=20pt]{geometry}
\usepackage[expansion=false]{microtype}
\usepackage{etoolbox}
\usepackage[hidelinks]{hyperref}

\numberwithin{equation}{section}
\newtheorem{theorem}{Theorem}[section]
\newtheorem{lemma}[theorem]{Lemma}
\newtheorem{proposition}[theorem]{Proposition}
\newtheorem{corollary}[theorem]{Corollary}
\theoremstyle{remark}
\newtheorem{remark}[theorem]{Remark}

\DeclareMathOperator{\tr}{tr}
\DeclareMathOperator{\Scal}{Scal}
\DeclareMathOperator{\Vol}{Vol}

\title[Second pinching and a moment-defect gap]
{An improved second pinching theorem\\
for minimal hypersurfaces with\\
constant scalar curvature in spheres}
\author[Ruihan Chen]{Ruihan Chen\\
\normalfont Faculty of Science, The University of Hong Kong}
\makeatletter
\patchcmd{\@setauthors}{\MakeUppercase{\authors}}{\authors}{}{}
\def\@evenhead{\normalfont\scriptsize
  \rlap{\thepage}\hfil\shortauthors\hfil}
\makeatother
\date{September 5, 2026}
\keywords{Minimal hypersurface, constant scalar curvature, Chern conjecture,
second pinching, Peng--Terng identity}
\hypersetup{
  pdftitle={An improved second pinching theorem for minimal hypersurfaces with constant scalar curvature in spheres},
  pdfauthor={Ruihan Chen},
  pdfsubject={Second pinching and an average quadratic moment-defect gap},
  pdfkeywords={Minimal hypersurface, constant scalar curvature, Chern conjecture, second pinching}
}

\begin{document}

\begin{abstract}
Let $M^n$ be a closed minimal hypersurface of the unit sphere with constant
squared norm $S$ of the second fundamental form. For $n\geq4$, we prove that
$S>n$ implies $S>85n/59$, improving the bound $10n/7$ of Suh and Yang.
No constancy assumption is imposed on the cubic trace $f_3$.
The proof combines a symmetrized Hessian estimate, an integral identity for
a mixed moment, and a weighted estimate for the repeated-index components
of the covariant derivative of the second fundamental form. An exact
polynomial argument treats the entire excluded interval. We also prove an
explicit positive lower bound for the average of the normalized quadratic
defect $P/S^2$, where $P=f_4-f_3^2/S-S^2/n$. This second estimate holds in
every dimension $n\geq2$ and gives a quantitative obstruction to
concentration near the locus of two principal curvatures.
\end{abstract}

\maketitle

\section{Introduction and main results}
\label{sec:1}

Let $F:M^n\to\mathbb{S}^{n+1}(1)$ be a smooth minimal immersion of a closed
connected manifold. Here and throughout, closed means compact without
boundary. Denote the shape operator by $A$, the second fundamental form
by $h$, and the principal curvatures by $\lambda_1,\ldots,\lambda_n$. Set
\begin{equation}
 S=|h|^2=\tr(A^2),\qquad f_r=\tr(A^r).
 \label{eq:1.1}
\end{equation}
The Gauss equation gives $\Scal_M=n(n-1)-S$, so constant scalar curvature
is equivalent to constant $S$.

Simons' identity \cite{ref11} implies that a constant value of $S$ in $[0,n]$
is either $0$ or $n$. In the first case the immersion is totally geodesic.
In the second case its image is, up to an ambient isometry, a Clifford
minimal hypersurface
\begin{equation}
 \mathbb{S}^m\!\left(\sqrt{\frac{m}{n}}\right)
 \times\mathbb{S}^{n-m}\!\left(\sqrt{\frac{n-m}{n}}\right),
 \qquad 1\leq m\leq n-1,
 \label{eq:1.2}
\end{equation}
by the classification of Chern--do Carmo--Kobayashi and Lawson
\cite{ref1,ref8}. For an immersion, this statement concerns its image;
the source may cover that image.

Chern's discreteness conjecture asks whether, for fixed $n$, the constant
values of $S$ arising in this way form a discrete set. The second pinching
problem asks whether
\begin{equation}
 S>n\quad\Longrightarrow\quad S\geq2n.
 \label{eq:1.3}
\end{equation}
Peng and Terng introduced the higher-order identities used to study this
problem \cite{ref9,ref10}. Yang and Cheng obtained successive improvements
of the second gap \cite{ref14,ref15}. Suh and Yang \cite{ref12} proved the
estimate $S>10n/7$ whenever $S>n$.

Stronger statements are available under additional assumptions. For
example, Cheng--Wei--Yamashiro \cite{ref4} proved
$S>1.8252n-0.712898$ for complete minimal hypersurfaces with constant
$S>n$, constant $f_3$, and $n\geq5$. Results under a Willmore condition and
under constant Gauss--Kronecker curvature in dimension four appear in
\cite{ref7,ref5}, respectively. Tan--Tang--Xie--Yan \cite{ref13} study local
finiteness of the values of $S$ for closed embedded minimal hypersurfaces
with constant $S$ and constant $f_3$. These hypotheses and conclusions
differ from the estimate below.

\begin{theorem}[Second pinching]\label{thm:1.1}
Let $F:M^n\to\mathbb{S}^{n+1}(1)$, $n\geq4$, be a closed connected
minimally immersed hypersurface with constant $S$. If $S>n$, then
\begin{equation}
 S>\frac{85}{59}n.
 \label{eq:1.4}
\end{equation}
Consequently, if $S\leq85n/59$, then $S=0$ or $S=n$. Up to an ambient
isometry, the image of $F$ is an equatorial sphere or one of the Clifford
hypersurfaces \eqref{eq:1.2}, respectively.
\end{theorem}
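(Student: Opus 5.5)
We argue by contradiction: assume $n<S\le 85n/59$ with $S$ constant, and derive a contradiction from the Peng--Terng integral identities. The final classification statement then follows from Simons' identity together with the Chern--do Carmo--Kobayashi and Lawson theorem, as recalled in the introduction.

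\emph{First-order identity.} Since $S$ is constant, Simons' identity gives
\[
|\nabla h|^2=S(S-n)>0 .
\]

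\emph{Second-order identity.} We use the Peng--Terng formula for $\tfrac12\Delta|\nabla h|^2$. It expresses
\[
\int_M|\nabla^2 h|^2
\]
through $\int_M(S-2n-3)|\nabla h|^2$ and the cubic terms
\[
3\int_M\bigl(2B-A\bigr),\qquad A=\sum h_{ijk}^2\lambda_i^2,\quad B=\sum h_{ijk}^2\lambda_i\lambda_j .
\]
Integrating $\Delta f_3$ and using $\int_M\Delta f_3=0$ gives a second relation. It ties $\int_M(A-B)$, or equivalently $\int_M\sum h_{ijk}^2\lambda_i\lambda_j$, to the mixed moment
\[
\int_M\bigl(Sf_4-f_3^2-S^2\bigr)=\int_M S\,P+\Bigl(\tfrac1n-1\Bigr)S^3 .
\]
This is the ``integral identity for a mixed moment'' mentioned in the abstract.

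\emph{Lower bound for $|\nabla^2 h|^2$.} We bound $|\nabla^2 h|^2$ from below by the symmetrized Hessian, in the spirit of Peng--Terng and Suh--Yang. Decompose $h_{ijkl}$ into its totally symmetric part and the commutator part. By the Ricci identity,
\[
h_{ijkl}-h_{ijlk}=\sum_m\bigl(h_{im}R_{mjkl}+h_{jm}R_{mikl}\bigr),
\]
and the commutator part is $(\lambda_i-\lambda_j)(1+\lambda_i\lambda_j)h_{ij}$-type data. This gives
\[
|\nabla^2h|^2\ \ge\ \tfrac{3}{2}\sum(\lambda_i-\lambda_j)^2(1+\lambda_i\lambda_j)^2\cdot(\cdots)
\]
in integrated form, together with the symmetric remainder. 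The symmetric part is estimated from below by the gradient-of-$|\nabla h|$-type Kato inequality, which is trivial here because $S$ is constant. The commutator part integrates to a quantity involving $Sf_4-f_3^2-S^2$.

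\emph{Upper bound for $A-2B$.} We bound $A-2B$ from above pointwise by splitting $h_{ijk}$ into three classes: all indices distinct, exactly two equal, all three equal. The distinct-index part is handled by the classical inequality
\[
\lambda_i^2+\lambda_j^2+\lambda_k^2-\lambda_i\lambda_j-\lambda_j\lambda_k-\lambda_k\lambda_i\le\tfrac32 S,
\]
or a refinement of it. The repeated-index components $h_{iik}$ are where Suh--Yang lose. For these we use a weighted estimate based on the constraint $\sum_i h_{iik}=0$ (minimality) and $\sum_i\lambda_i h_{iik}=0$ (constant $S$). These two linear constraints let us bound
\[
\sum_i\lambda_i^2h_{iik}^2\quad\text{and}\quad\sum_{i}\lambda_i\lambda_k h_{iik}^2
\]
by $c(S)\sum_i h_{iik}^2$ with a constant better than the naive $S$. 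Choosing the weight parameter optimally is where the numerology $85/59$ should arise.

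\emph{Conclusion.} Combining the three estimates gives an inequality of the form
\[
0\ \ge\ \int_M\Phi_n(S)\,|\nabla h|^2+\int_M\Psi_n(S)\,S\,P
\]
with $P\ge0$. This uses $f_3^2\le\frac{n-2}{n-1}\ldots$, or simply the Cauchy--Schwarz fact that $P\ge0$. We then show that $\Phi_n(S)>0$ and $\Psi_n(S)\ge0$ on the whole interval $n<S\le 85n/59$ for every $n\ge4$, which contradicts $|\nabla h|^2=S(S-n)>0$.

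We treat this as an exact polynomial problem. Write $S=tn$ with $t\in(1,85/59]$ and clear denominators. This yields polynomials in $(n,t)$, and we verify their positivity by exhibiting coefficientwise-positive expansions in the variables $n-4$ and $85/59-t$. The small dimensions $n=4,5$, where lower-order terms matter most, may need to be checked separately.

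\emph{Main obstacle.} The crux is the repeated-index weighted estimate. The optimization over the weight must be done so that the resulting $\Phi_n$ stays positive up to exactly $t=85/59$ uniformly in $n$. Getting this sharp, rather than merely improving $10/7$, is the step most likely to need adjustment. I would first try a one-parameter weight $\lambda_i\mapsto\lambda_i-\mu\lambda_k$ and optimize $\mu$ in terms of $t$.
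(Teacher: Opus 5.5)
Your proposal does not prove the statement, because the step that must produce $85/59$ is never supplied.

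You keep only the commutator part $\frac34G$ of $|\nabla^2h|^2=|U|^2+\frac34G$. You dismiss the fully symmetric part $U$ by a Kato inequality that contributes nothing here, since $|\nabla h|^2$ is constant. You then hope that a pointwise upper bound for $\mathcal{A}-2\mathcal{B}$, with an ``optimally chosen'' weight on the components $h_{iik}$, reaches $85/59$. No such bound is written down. The weight, the constant $c(S)$, and the functions $\Phi_n,\Psi_n$ are never defined, so the closing ``exact polynomial'' check has nothing to check. Nothing indicates that this classical Peng--Terng framework improves on the known constants.

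Several steps you do state are also incorrect:
\begin{itemize}
\item Integrating $\frac13\Delta f_3=(n-S)f_3+2\mathcal{C}$ gives only $\int_M\mathcal{C}=\frac{S-n}{2}\int_Mf_3$, with no $\mathcal{A}$ or $\mathcal{B}$. The identity you want, $\int_M(\mathcal{A}-2\mathcal{B})=\int_M(Sf_4-f_3^2-S^2)$, comes from integrating the commutation formula by parts (compare \eqref{eq:3.7}--\eqref{eq:3.9}).
\item $Sf_4-f_3^2-S^2=SP+S^2(S-n)/n$, not $SP+(\frac1n-1)S^3$.
\item The symmetrized weight of $\mathcal{A}-2\mathcal{B}$ is $\lambda_i^2+\lambda_j^2+\lambda_k^2-2(\lambda_i\lambda_j+\lambda_j\lambda_k+\lambda_k\lambda_i)$, not the expression you display.
\end{itemize}

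The paper's gain comes from ingredients absent from your plan.
\begin{itemize}
\item $|U|^2$ is genuinely bounded below (Proposition~\ref{prop:3.2}) by expanding $|U-X/2+Y(\xi)+\gamma Z+\eta W|^2\ge0$ with $X=h\otimes\Phi+\Phi\otimes h$. This yields $-2\mathcal{A}$, $2(f_3/S)\mathcal{C}$, $\mathcal{C}^2/S^2$, and a positive term $4J/(tS^2)$ built from $D_l=\sum_i\lambda_i^2h_{iil}$.
\item The mixed moment is $F_c=\frac S4f_4-\frac c6f_3^2$ with $c=89/67$. Subtracting $\kappa=16/3$ times its Laplacian, whose integral vanishes, turns $5\mathcal{A}-6\mathcal{B}$ into $-\frac{17}{3}(\mathcal{A}+2\mathcal{B})$. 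Through $\Delta(f_3^2)$ it also brings in $f_3\mathcal{C}$ and $|\nabla f_3|^2=9\sum_lD_l^2$. What is then needed is a \emph{lower} bound for $\mathcal{A}+2\mathcal{B}$ (Lemma~\ref{lem:3.3}), not an upper bound for $\mathcal{A}-2\mathcal{B}$.
\item The repeated-index estimate (Lemma~\ref{lem:3.4}) controls $D_k=\sum_i\lambda_i^2h_{iik}=\sum_i\varphi_ih_{iik}$, not $\sum_i\lambda_i^2h_{iik}^2$. Using both constraints $\sum_ih_{iik}=\sum_i\lambda_ih_{iik}=0$, it gives $D_k^2\le\frac{nP}{n+2+2n\lambda_k^2/S}\bigl(h_{kkk}^2+3\sum_{i\ne k}h_{iik}^2\bigr)$. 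This absorbs the residual $D_k^2$ terms left after the cubic projection.
\item In the resulting inequality the coefficient of $\langle f_3^2/S^3\rangle$ is positive, and that of $\langle P/S^2\rangle$ is positive near $t=26/85$. The argument therefore closes only with the upper moment bound $2P/S^2+f_3^2/S^3\le1-2/n$ of Lemma~\ref{lem:2.3}. The lower bound $P\ge0$ that you invoke cannot play this role.
\end{itemize}
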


Theorem~\ref{thm:1.1} imposes no condition on $f_3$. Its improvement over
the Suh--Yang threshold is
\[
 \frac{85}{59}-\frac{10}{7}=\frac{5}{413}.
\]
The argument below treats all $0<(S-n)/S\leq26/85$ directly, so the
Suh--Yang estimate is used for comparison and is not an input to the proof.

For $S>0$, define the quadratic defect tensor and its squared norm by
\begin{equation}
 \Phi=A^2-\frac{f_3}{S}A-\frac{S}{n}I,\qquad
 P=|\Phi|^2=f_4-\frac{f_3^2}{S}-\frac{S^2}{n}.
 \label{eq:1.5}
\end{equation}
Thus $P$ measures the failure of the principal curvatures to satisfy a
common quadratic equation. In particular, $P=0$ at a point if and only if
there are exactly two distinct principal curvatures there. For a function
$v$ on $M$, write
\begin{equation}
 \langle v\rangle_M=\frac{1}{\Vol(M)}\int_M v\,d\mu.
 \label{eq:1.6}
\end{equation}

\begin{theorem}[Average defect gap]\label{thm:1.2}
Let $F:M^n\to\mathbb{S}^{n+1}(1)$, $n\geq2$, be a closed connected
minimally immersed hypersurface with constant $S>n$. Define
\begin{equation}
 c_n=\sqrt{\frac{n-1}{n}},\quad
 L_n=\frac{4}{3}+2c_n,\quad
 Q_n=\frac{11n+4}{n+2},\quad C_n=\frac{4}{3n},
 \label{eq:1.7}
\end{equation}
and
\begin{equation}
 \gamma_n=\frac{\sqrt{L_n^2+4Q_nC_n}-L_n}{2Q_n}.
 \label{eq:1.8}
\end{equation}
Then
\begin{equation}
 \left\langle\frac{P}{S^2}\right\rangle_M\geq\gamma_n^2>0.
 \label{eq:1.9}
\end{equation}
Moreover, $n^2\gamma_n^2\to4/25$ as $n\to\infty$.
\end{theorem}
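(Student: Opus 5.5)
We plan to combine the two standard integral identities for constant $S$ with a pointwise estimate that makes $P$ control the ``good'' part of $|\nabla h|^2$. Since $S$ is constant, Simons' identity gives
\[
 |\nabla h|^2=S(S-n)\quad\text{pointwise}.
\]
The Peng--Terng identity, obtained by integrating $\frac12\Delta|\nabla h|^2=0$, gives
\[
 \int_M|\nabla^2h|^2=\int_M\Bigl[(S-n)\bigl(\text{quadratic in }\nabla h\bigr)+3(2B-A)\Bigr],
\]
with $A=\sum h_{ijk}^2\lambda_i^2$ and $B=\sum h_{ijk}^2\lambda_i\lambda_j$. This can be rewritten as $\int|\nabla^2 h|^2=\int[S(S-n)(2S-n)\ldots]$ minus terms involving $f_4$ and $f_3^2$. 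The constant terms depend only on $S$. The coupling to the principal-curvature configuration appears through $3(2B-A)$. Expanding $A$ and $B$ against $A^2=\Phi+\frac{f_3}{S}A+\frac{S}{n}I$ shows that $2B-A$ equals a main term, which is explicit in $S$ and $f_3$, plus a term linear in $\Phi$ contracted with $\nabla h\otimes\nabla h$.

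The first step is to bound that $\Phi$-contraction pointwise by Cauchy--Schwarz. We expect it to be at most $L_n\sqrt{P}\,|\nabla h|^2$, where the $\frac43$ comes from symmetrizing over the three indices of $h_{ijk}$. The $2c_n$ comes from the trace-free bound $|\lambda_i|\le c_n\sqrt S$ applied to the mixed $f_3/S$ term. The second step is to bound the main term below. For this we use a symmetrized Hessian estimate: $|\nabla^2h|^2$ is at least $\frac{3}{n+2}$ times (or a similar multiple of) the squared norm of its fully traced part, which is computable from $\Delta h=(n-S)h$ and the commutation formulas. This produces a quadratic contribution $Q_nP$, with $Q_n=\frac{11n+4}{n+2}$ coming out of the $\frac{3}{n+2}$-type symmetrization constant. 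It also leaves a positive residual $C_nS^2$, with $C_n=\frac{4}{3n}$ inherited from the Clifford-type equality case.

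After normalizing by $S^2$ and using $S-n>0$ to drop or sign terms, we aim for
\[
 C_n\le L_n\Bigl\langle\sqrt{P}/S\Bigr\rangle_M+Q_n\Bigl\langle P/S^2\Bigr\rangle_M .
\]
Put $x=\langle P/S^2\rangle_M^{1/2}$. Jensen gives $\langle\sqrt P/S\rangle\le x$, so $Q_nx^2+L_nx-C_n\ge0$. Hence $x\ge\gamma_n$, the positive root, which is \eqref{eq:1.9}. For the asymptotics, $L_n\to\frac{10}{3}$, $Q_n\to11$, and $nC_n=\frac43$. The root is therefore approximately $C_n/L_n=\frac{4}{3n}\cdot\frac{3}{10}=\frac{2}{5n}$, so $n^2\gamma_n^2\to\frac4{25}$.

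The main obstacle is the second step. We must arrange the Hessian estimate and the Peng--Terng identity so that every term not controlled by $\sqrt P|\nabla h|^2$ or $P$ has a definite sign once $S>n$, with the exact constants $Q_n$ and $C_n$. This needs particular care in the $f_3^2/S$ terms, which must cancel between the expansion of $f_4$ and the $\Phi$-decomposition. We would first try to express everything through $|\nabla h|^2=S(S-n)$ and $\int|\nabla\Phi|^2$-type quantities, and then check the constants against the Clifford case, where $P\equiv0$ and the inequality must fail exactly because $S=n$.
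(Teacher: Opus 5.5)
Your closing step is correct and is the one the paper uses. Suppose you have $\frac12\Delta P\ge S^2(S-n)\bigl(C_n-L_n\chi-Q_n\chi^2\bigr)$ with $\chi=\sqrt{P}/S$. Then integration, $\langle\chi\rangle_M\le\langle\chi^2\rangle_M^{1/2}$, monotonicity of $Q_ny^2+L_ny$, and your asymptotic computation give the theorem.

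\textbf{The gap is that this inequality is never derived.} The constants are read off from the statement, and the mechanism you propose for producing them does not work.

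\emph{The decomposition is not as claimed.} Substituting $A^2=\Phi+aA+\frac{S}{n}I$, with $a=f_3/S$, only handles $\mathcal{A}$, giving $\mathcal{A}=\sum_k\langle\Phi,(\nabla_kA)^2\rangle+a\mathcal{C}+\frac{S}{n}|\nabla h|^2$. The terms $a\mathcal{C}$ and $\mathcal{B}=\sum_k\tr(A\,\nabla_kA\,A\,\nabla_kA)$ are genuinely quadratic in $\nabla h$ and are not explicit in $S$ and $f_3$.

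\emph{Peng--Terng cannot give $S$-free constants.} For constant $S$ the Peng--Terng identity reads $|\nabla^2h|^2=S(S-n)(S-2n-3)+3(\mathcal{A}-2\mathcal{B})$. Its scalar term divided by $S^2(S-n)$ is $(S-2n-3)/S$, which depends on $S$ and is positive for $S>2n+3$. Playing it against a Hessian lower bound therefore yields coefficients depending on $t=(S-n)/S$, and the resulting inequality closes only on a bounded range of $t$. This is exactly the mechanism of Section~4, which stops at $t\le 26/85$. It does not give an inequality with constants $C_n,L_n,Q_n$ depending only on $n$ and valid for every $S>n$.

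\emph{The source of $2c_n$ is misidentified.} The bound $|\lambda_i|\le c_n\sqrt{S}$ produces a factor $\sqrt{S}$, not $\sqrt{P}$.

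The missing idea is a first-order Bochner formula for the defect itself. Set $\mathcal{T}=\sum_{i,j,k}(\lambda_i+\lambda_j-a)^2h_{ijk}^2$. With no assumption on $f_3$,
\[
 \frac12\Delta P=\mathcal{T}+2(n-S)P+2\sum_k\langle\Phi,(\nabla_kA)^2\rangle-S|\nabla a|^2 .
\]
Every term is controlled by $(S-n)P$ or by $|\nabla h|^2=S(S-n)$, so dividing by $S^2(S-n)$ gives constants depending only on $n$. Their sources differ from your guesses:
\begin{itemize}
\item $C_n=\frac{4}{3n}$ and the $\frac{4}{3}$ in $L_n$ come from root localization. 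Since $|\lambda_i^2-a\lambda_i-S/n|\le\sqrt{P}$, each $\lambda_i$ lies near a root of $s^2-as-S/n$, and the squared gap between the roots is $a^2+4S/n$. Two of any three indices therefore sit near the same root, which gives $\mathcal{T}\ge\frac13\bigl(a^2+\frac{4S}{n}-4\sqrt{P}\bigr)_+|\nabla h|^2$. Your factor $\frac13$ from symmetrizing over the three index pairs does appear, but here in $\mathcal{T}$.
\item $2c_n$ comes from $\lambda_{\min}(\Phi)\ge-c_n\sqrt{P}$, which holds because $\Phi$ is trace-free with $|\Phi|^2=P$, combined with $(\nabla_kA)^2\ge0$.
\item $Q_n=2+\frac{9n}{n+2}$ comes from the term $2(n-S)P$ together with $|\nabla a|^2\le\frac{9nP}{(n+2)S^2}|\nabla h|^2$. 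The latter follows from $a_{;k}=\frac{3}{S}\sum_i\varphi_ih_{iik}$ and from $\sum_{i,k}h_{iik}^2\le\frac{n}{n+2}|\nabla h|^2$, which is a consequence of $\sum_ih_{iik}=0$.
\end{itemize}
No Hessian estimate and no Clifford equality case enter the argument.
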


The defect in \eqref{eq:1.5} has antecedents in the moment methods of
Yang--Cheng \cite{ref15} and Cheng--Wei--Yamashiro \cite{ref4}. It is
related to the pairwise invariant used by Ding--Xin \cite{ref6} through
\begin{equation}
 G:=\sum_{i,j}(\lambda_i-\lambda_j)^2(1+\lambda_i\lambda_j)^2
 =2SP+\frac{2S}{n}(S-n)^2.
 \label{eq:1.10}
\end{equation}
The mixed moment $Sf_4/4-cf_3^2/6$ and a refined cubic projection
estimate appear in the self-shrinker work of Cheng--Wei \cite{ref3};
related symmetrization arguments also occur in \cite{ref2}. We use these
established devices with the spherical curvature identities, retain the
terms involving the variation of $f_3$, and estimate the repeated-index
components under both trace constraints. The resulting inequality closes
with $c=89/67$ and the moment bound $2P/S^2+f_3^2/S^3\leq1-2/n$.

Section~\ref{sec:2} fixes conventions and records the identities.
Section~\ref{sec:3} proves the tensor estimates. Section~\ref{sec:4} proves
Theorem~\ref{thm:1.1}, and Section~\ref{sec:5} proves
Theorem~\ref{thm:1.2} together with a refinement involving a lower bound
for $|f_3|/S^{3/2}$.

\section{Preliminaries and moment identities}
\label{sec:2}

\subsection{Conventions and curvature formulas}
We use the Laplacian $\Delta=\tr\nabla^2=\operatorname{div}\nabla$.
All tensor inner products use full ordered sums of components. Roman
indices range from $1$ to $n$, and every sum in a component formula is
displayed explicitly. At a fixed point, choose an orthonormal frame for
which $h_{ij}=\lambda_i\delta_{ij}$. Covariant differentiations are
performed before making this choice; no smooth choice of eigenvectors
is required.

If a global unit normal is unavailable, calculations are made on the
unit normal double cover. Reversing the normal changes the signs of $A$,
$f_3$, and $\mathcal{C}$ defined below, whereas $\Phi$, $P$, $f_3^2$, and
every scalar integrand used in the proofs are unchanged. The integral
identities therefore descend to $M$, and normalized averages are unchanged
by this cover.

Write $h_{ijk}=(\nabla_kh)_{ij}$ and
$h_{ijkl}=(\nabla_l\nabla_kh)_{ij}$. The Gauss, Codazzi, and Ricci
equations, with our convention, are
\begin{align}
 R_{ijkl}&=\delta_{ik}\delta_{jl}-\delta_{il}\delta_{jk}
              +h_{ik}h_{jl}-h_{il}h_{jk},\label{eq:2.1}\\
 h_{ijk}&=h_{ikj},\label{eq:2.2}\\
 h_{ijkl}-h_{ijlk}&=\sum_mh_{im}R_{mjkl}
                     +\sum_mh_{mj}R_{mikl}.\label{eq:2.3}
\end{align}
In particular, $h_{ijk}$ is fully symmetric. Simons' formula \cite{ref11}
reads
\begin{equation}
 \Delta h=(n-S)h,\qquad
 \frac12\Delta S=|\nabla h|^2+S(n-S).
 \label{eq:2.4}
\end{equation}
Whenever $S>n$ is constant, put
\begin{equation}
 t=\frac{S-n}{S}\in(0,1),\qquad
 q=|\nabla h|^2=S(S-n)=tS^2.
 \label{eq:2.5}
\end{equation}
Then $q$ is constant. Minimality and constancy of $S$ give, for each $k$,
\begin{equation}
 \sum_i\lambda_i=0,\qquad \sum_i h_{iik}=0,\qquad
 \sum_i\lambda_i h_{iik}=0.
 \label{eq:2.6}
\end{equation}

Define the contractions
\begin{align}
 \mathcal{A}&=\sum_{i,j,k}\lambda_i^2h_{ijk}^2,\qquad
 \mathcal{B}=\sum_{i,j,k}\lambda_i\lambda_jh_{ijk}^2,
 \label{eq:2.7}\\
 \mathcal{C}&=\sum_{i,j,k}\lambda_i h_{ijk}^2,\qquad
 D_l=\sum_i\lambda_i^2h_{iil},\qquad
 J=\sum_l\lambda_l^2D_l^2.
 \label{eq:2.8}
\end{align}
The scalar $\mathcal{A}$ is distinct from the shape operator $A$.

\begin{lemma}[Differential moment identities]\label{lem:2.1}
For a minimal hypersurface of the unit sphere,
\begin{align}
 \frac13\Delta f_3&=(n-S)f_3+2\mathcal{C},
 \label{eq:2.9}\\
 \frac14\Delta f_4&=(n-S)f_4+2\mathcal{A}+\mathcal{B},
 \label{eq:2.10}\\
 (f_3)_{;l}&=3D_l,\qquad |\nabla f_3|^2=9\sum_lD_l^2.
 \label{eq:2.11}
\end{align}
\end{lemma}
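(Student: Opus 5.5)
The plan is a direct tensor computation at a point, working with the tensors $\tr(A^r)$ written in components, so that no smooth choice of eigenvectors is needed.

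\textbf{Gradient formula.} First I write $f_r=\sum h_{i_1i_2}h_{i_2i_3}\cdots h_{i_ri_1}$ and differentiate:
\[
 (f_r)_{;l}=r\sum h_{i_1i_2l}(A^{r-1})_{i_2i_1}.
\]
In the diagonalizing frame at the point this becomes $r\sum_i\lambda_i^{r-1}h_{iil}$. For $r=3$ this gives $(f_3)_{;l}=3\sum_i\lambda_i^2h_{iil}=3D_l$, and summing the squares gives \eqref{eq:2.11}.

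\textbf{Laplacian formula.} Differentiating once more, I get
\[
 \frac1r\Delta f_r=\sum_{l}\sum\Bigl[h_{i_1i_2ll}(A^{r-1})_{i_2i_1}
 +h_{i_1i_2l}\,\nabla_l(A^{r-1})_{i_2i_1}\Bigr].
\]
\emph{First term.} By Simons' identity \eqref{eq:2.4}, $\sum_l h_{ijll}=(\Delta h)_{ij}=(n-S)h_{ij}$. The first term is therefore $(n-S)\tr(A\cdot A^{r-1})=(n-S)f_r$.

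\emph{Second term.} I expand $\nabla_l(A^{r-1})=\sum_{a+b=r-2}A^a(\nabla_lA)A^b$ and evaluate in the diagonal frame, giving
\[
 \sum_{i,j,l}h_{ijl}^2\sum_{a+b=r-2}\lambda_i^a\lambda_j^b .
\]
For $r=3$ the inner sum is $\lambda_i+\lambda_j$. Using the symmetry $i\leftrightarrow j$, this yields $2\mathcal{C}$, which proves \eqref{eq:2.9}. For $r=4$ the inner sum is $\lambda_i^2+\lambda_i\lambda_j+\lambda_j^2$. By the same symmetry this gives $2\mathcal{A}+\mathcal{B}$, which proves \eqref{eq:2.10}.

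\textbf{Where care is needed.} The main point to check is that Simons' formula is correctly invoked in its tensorial form $\Delta h=(n-S)h$. That form uses the Codazzi symmetry \eqref{eq:2.2}, the Ricci commutation \eqref{eq:2.3} and minimality, which are all available from \eqref{eq:2.1}--\eqref{eq:2.4}. The only other point is that $\sum_lh_{ijll}$ is indeed the $(i,j)$ component of $\Delta h$ under our convention $h_{ijkl}=(\nabla_l\nabla_kh)_{ij}$, which holds after tracing over $k=l$. Everything else is a routine index contraction.

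No constancy of $S$ and no sign choice of the normal beyond the local one is required. On the double cover, both sides of each identity transform consistently: they are odd in $A$ for $f_3$ and even for $f_4$.
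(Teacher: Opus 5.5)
Your proposal is correct and follows essentially the same route as the paper. Both differentiate $f_r$ by the product rule, use Simons' identity $\Delta h=(n-S)h$ for the second-derivative term, and evaluate the terms $\sum_{a+b=r-2}$ in a principal frame to obtain $2\mathcal{C}$ and $2\mathcal{A}+\mathcal{B}$. The only difference is notation: you work in components, while the paper writes the same terms as traces $\tr(A^\ell A_{;k}A^{r-2-\ell}A_{;k})$.
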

\begin{proof}
For $A_{;k}=\nabla_kA$, the product rule and cyclicity of the trace give
\[
 (f_r)_{;k}=r\tr(A^{r-1}A_{;k}).
\]
Differentiating once more and using \eqref{eq:2.4}, for $r\geq2$ we obtain
\[
 \Delta f_r=r(n-S)f_r+
 r\sum_k\sum_{\ell=0}^{r-2}\tr(A^\ell A_{;k}A^{r-2-\ell}A_{;k}).
\]
For $r=3$, the two terms in the inner sum each contribute $\mathcal{C}$
after summing over $k$. For $r=4$, the two outer terms contribute
$\mathcal{A}$ each, and the middle term contributes $\mathcal{B}$.
These observations give \eqref{eq:2.9} and \eqref{eq:2.10}. The first
derivative formula with $r=3$ gives \eqref{eq:2.11}.
\end{proof}

The Peng--Terng formula \cite{ref9,ref10}, also recorded in
\cite[\S2, (2.2)]{ref6}, is
\begin{equation}
 \frac12\Delta|\nabla h|^2=|\nabla^2h|^2+(2n+3-S)|\nabla h|^2
 +3(2\mathcal{B}-\mathcal{A})-\frac32|\nabla S|^2.
 \label{eq:2.12}
\end{equation}
For constant $S>n$, both $\nabla S$ and $\Delta q$ vanish. Hence
\begin{equation}
 |\nabla^2h|^2=S(S-n)(S-2n-3)+3(\mathcal{A}-2\mathcal{B}).
 \label{eq:2.13}
\end{equation}

\subsection{Algebra of the defect}
Since $\tr A=0$ and $\tr A^2=S$, the tensor in \eqref{eq:1.5} satisfies
\begin{equation}
 \langle\Phi,I\rangle=\langle\Phi,A\rangle=0,\qquad
 \langle\Phi,A^2\rangle=P.
 \label{eq:2.14}
\end{equation}
At a principal frame,
\begin{equation}
 \varphi_i=\lambda_i^2-\frac{f_3}{S}\lambda_i-\frac{S}{n},\qquad
 P=\sum_i\varphi_i^2.
 \label{eq:2.15}
\end{equation}
This proves the nonnegativity asserted in \eqref{eq:1.5}. If $P=0$, all
$\lambda_i$ are roots of $s^2-(f_3/S)s-S/n$; the roots are distinct and
have opposite signs. Minimality and $S>0$ require both roots to occur.
Conversely, if there are two distinct values, their common monic
quadratic has constant coefficient $-S/n$ by taking its trace and linear
coefficient $-f_3/S$ by pairing with $A$. Thus $P=0$.

\begin{lemma}[Pairwise identity]\label{lem:2.2}
For a minimal hypersurface with $S>0$,
\begin{equation}
 \begin{aligned}
  G&=2(Sf_4-f_3^2-2S^2+nS)\\
   &=2SP+\frac{2S}{n}(S-n)^2.
 \end{aligned}
 \label{eq:2.16}
\end{equation}
\end{lemma}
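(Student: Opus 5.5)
We expand $G$ directly in power sums and then substitute the definition of $P$ from \eqref{eq:1.5}; the whole argument is polynomial algebra at a point. Fix a point and a principal frame. Write
\[
 (\lambda_i-\lambda_j)^2(1+\lambda_i\lambda_j)^2
 =(\lambda_i^2-2\lambda_i\lambda_j+\lambda_j^2)
 (1+2\lambda_i\lambda_j+\lambda_i^2\lambda_j^2)
\]
and multiply out. The summand becomes a symmetric polynomial in $(\lambda_i,\lambda_j)$ that is a sum of monomials $\lambda_i^a\lambda_j^b$. Summing over all ordered pairs $(i,j)$ turns each monomial into the product $p_ap_b$ of power sums, where $p_0=n$, $p_1=\sum_i\lambda_i=0$, $p_2=S$, $p_3=f_3$ and $p_4=f_4$.

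The expansion produces the following terms.
\begin{itemize}
\item From $\lambda_i^2+\lambda_j^2$: the sum $2nS$.
\item From $-2\lambda_i\lambda_j$: the sum $-2p_1^2=0$.
\item From the cross terms of degree four, namely $2\lambda_i\lambda_j(\lambda_i^2+\lambda_j^2)-4\lambda_i^2\lambda_j^2$: the sum $4p_3p_1-4S^2=-4S^2$.
\item From the degree-six terms, namely $\lambda_i^2\lambda_j^2(\lambda_i^2+\lambda_j^2)-2\lambda_i^3\lambda_j^3$: the sum $2Sf_4-2f_3^2$.
\end{itemize}
Adding these gives
\[
 G=2(Sf_4-f_3^2-2S^2+nS),
\]
which is the first line of \eqref{eq:2.16}. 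Minimality, $p_1=0$, is exactly what kills the odd cross terms $p_1^2$ and $p_1p_3$. The only step that needs care is the bookkeeping of the coefficients $\pm2$ and $4$ in this expansion. I would double-check it on a two-valued example such as the Clifford case, where $G$ should reduce to $\frac{2S}{n}(S-n)^2$.

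For the second line, substitute $f_4=P+f_3^2/S+S^2/n$ from \eqref{eq:1.5}, which is legitimate since $S>0$. Then
\[
 Sf_4-f_3^2=SP+\frac{S^3}{n},
\]
so that
\[
 G=2SP+2\Bigl(\frac{S^3}{n}-2S^2+nS\Bigr)=2SP+\frac{2S}{n}(S^2-2nS+n^2).
\]
The last factor is $(S-n)^2$, which gives the claimed form $2SP+\frac{2S}{n}(S-n)^2$. Both sides are frame-independent scalars, so the pointwise identity holds globally.
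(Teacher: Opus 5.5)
Your proposal is correct and follows the paper's own proof: expand the product, sum over ordered pairs into power sums so that the degree-two, four, and six parts give $2nS$, $-4S^2$, and $2Sf_4-2f_3^2$ using $\sum_i\lambda_i=0$, then substitute $f_4=P+f_3^2/S+S^2/n$. Your coefficient bookkeeping is right, so the Clifford check is only an optional sanity test and is not needed for the proof.
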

\begin{proof}
Expand $(\lambda_i-\lambda_j)^2(1+\lambda_i\lambda_j)^2$ and sum over
ordered pairs. The terms of degrees two, four, and six sum to $2nS$,
$-4S^2$, and $2Sf_4-2f_3^2$, respectively, because $\sum_i\lambda_i=0$.
Substitute $f_4=P+f_3^2/S+S^2/n$ for the second equality.
\end{proof}

Normalize the moments by
\begin{equation}
 \mu_i=\frac{\lambda_i}{\sqrt S},\quad
 m_r=\sum_i\mu_i^r,\quad p=\frac{P}{S^2},\qquad
 z=\frac{f_3^2}{S^3}=m_3^2.
 \label{eq:2.17}
\end{equation}
Then $m_1=0$, $m_2=1$, and $p=m_4-m_3^2-1/n$.

\begin{lemma}[Moment inequality]\label{lem:2.3}
At each point with $S>0$,
\begin{equation}
 p\geq0,\qquad z\geq0,\qquad 2p+z\leq1-\frac2n.
 \label{eq:2.18}
\end{equation}
\end{lemma}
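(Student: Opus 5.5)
The first two inequalities in \eqref{eq:2.18} are immediate. By \eqref{eq:2.15}, $P=\sum_i\varphi_i^2\geq0$, and $z=m_3^2\geq0$. For the third, I rewrite everything in the normalized moments \eqref{eq:2.17}. Since $p=m_4-m_3^2-1/n$,
\[
 2p+z=2m_4-m_3^2-\frac2n,
\]
so the claim is equivalent to the purely algebraic inequality
\[
 2m_4-m_3^2\leq1 \qquad\text{whenever } m_1=0,\ m_2=1 .
\]
This is the statement I will prove. It is sharp for $n=2$, where $\mu=(1/\sqrt2,-1/\sqrt2)$ gives $m_4=1/2$ and $m_3=0$. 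Since $m_2=1$, it is equivalent to the homogeneous degree-six inequality $m_2^3-2m_2m_4+m_3^2\geq0$ under $m_1=0$.

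My first attempt would be a sum-of-squares certificate. I would look for an expression
\[
 \sum_{i,j}\bigl(\text{quadratic in }\mu_i,\mu_j\bigr)^2,
\]
possibly combined with multiples of $m_1$, whose expansion reproduces $m_2^3-2m_2m_4+m_3^2$. As a model, $\sum_{i,j}\mu_i^2\mu_j^2(\mu_i-\mu_j)^2=2(m_2m_4-m_3^2)$. The natural decomposition is
\[
 1-2m_4+m_3^2=(m_2^2-m_4)-(m_4-m_3^2),
 \qquad m_2^2-m_4=\sum_{i\neq j}\mu_i^2\mu_j^2 .
\]
The task is then to show that the off-diagonal sum $\sum_{i\ne j}\mu_i^2\mu_j^2$ dominates $\frac12\sum_{i,j}\mu_i^2\mu_j^2(\mu_i-\mu_j)^2$ once $m_2=1$ and $m_1=0$ are used. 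Equivalently, the variance of $\mu_i$ with weights $\mu_i^2$ must be at most the off-diagonal mass. Here I would substitute $\mu_i=-\sum_{j\neq i}\mu_j$ into terms of the form $\mu_i^3\mu_j$, which is exactly how the constraint $m_1=0$ enters.

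If no clean certificate appears, I will fall back on a variational argument. I maximize $F=2m_4-m_3^2$ on the compact set $\{m_1=0,\ m_2=1\}\subset\mathbb{R}^n$. At a maximizer the Lagrange condition reads
\[
 8\mu_i^3-6m_3\mu_i^2=\alpha\mu_i+\beta\quad\text{for all } i .
\]
Hence the $\mu_i$ take at most three distinct values. This reduces the problem to configurations with multiplicities $a,b,c$, $a+b+c=n$, and values $x,y,w$ satisfying two constraints, which leaves a one-parameter family for each choice of multiplicities. For the two-value case ($P=0$), a direct computation gives $2m_4-m_3^2=1-2/n+m_3^2$ with $m_3^2\leq 2/n$-type bounds. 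I expect the bound to be attained exactly at the most unbalanced two-value configuration allowed, or at $n=2$.

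The main obstacle is the three-value case. I would handle it by eliminating $w$ via the constraints and showing that the resulting one-variable expression is at most $1$, using $\mu_i^2\leq(n-1)/n$, which follows from Cauchy--Schwarz and $m_1=0$.
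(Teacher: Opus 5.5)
There is a genuine gap: neither of your two routes reaches a proof.

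Your reformulation is right. With $m_2=1$ the claim is $m_4-m_3^2\le 1-m_4=\sum_{i\ne j}\mu_i^2\mu_j^2$, and $m_4-m_3^2=\frac12\sum_{i\ne j}\mu_i^2\mu_j^2(\mu_i-\mu_j)^2$. But you stop exactly where the proof should start: you never produce the domination. The hint you give (substituting $\mu_i=-\sum_{j\ne i}\mu_j$) also points the wrong way, because $m_1=0$ is not needed at all.

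The missing step is one line. For $i\ne j$, $(\mu_i-\mu_j)^2\le 2(\mu_i^2+\mu_j^2)\le 2m_2=2$, which gives the required inequality term by term. Tracking the slack in this step, namely $1-\frac12(\mu_i-\mu_j)^2=\frac12(\mu_i+\mu_j)^2+\sum_{k\ne i,j}\mu_k^2$, yields exactly the paper's certificate
\[
 m_2^3-2m_2m_4+m_3^2=\sum_{i<j}\mu_i^2\mu_j^2(\mu_i+\mu_j)^2+6\sum_{i<j<k}\mu_i^2\mu_j^2\mu_k^2\ge0 .
\]
This identity holds for every real vector, and it is how the paper proves the lemma.

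Your variational fallback does not rescue the argument as written. The Lagrange reduction to at most three values is fine, but the two-value computation is wrong. There $p=0$ gives $m_4=m_3^2+1/n$, hence $2m_4-m_3^2=m_3^2+2/n$, not $1-2/n+m_3^2$. What you need is $m_3^2\le 1-2/n$, and the bound $m_3^2\le 2/n$ you invoke is false: multiplicities $1,n-1$ give $m_3^2=(n-2)^2/(n(n-1))$, which exceeds $2/n$ for $n\ge5$.

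More importantly, your guess about where the maximum sits is wrong. The vector $\mu=(1/\sqrt2,-1/\sqrt2,0,\dots,0)$ gives $2m_4-m_3^2=1$ in every dimension. Every two-value configuration gives a value strictly below $1$ once $n\ge3$, since $a^2+b^2<(a+b)ab$ unless $a=b=1$. So the inequality is sharp for all $n$, not just $n=2$, and for $n\ge3$ equality occurs only in the three-value case you defer. The unspecified elimination over all multiplicity triples $(a,b,c)$ is therefore the whole content of the proof, not a detail, and the proposal gives no indication of how it would close at an equality configuration.
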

\begin{proof}
The first two assertions are immediate. For the third, the identities
\[
 \sum_{i<j}\mu_i^2\mu_j^2(\mu_i+\mu_j)^2=m_2m_4+m_3^2-2m_6,
\]
and
\[
 6\sum_{i<j<k}\mu_i^2\mu_j^2\mu_k^2=m_2^3-3m_2m_4+2m_6
\]
give, on using $m_2=1$,
\begin{equation}
 1-2m_4+m_3^2=\sum_{i<j}\mu_i^2\mu_j^2(\mu_i+\mu_j)^2
 +6\sum_{i<j<k}\mu_i^2\mu_j^2\mu_k^2\geq0.
 \label{eq:2.19}
\end{equation}
Since $2p+z=2m_4-m_3^2-2/n$, this is \eqref{eq:2.18}.
\end{proof}

\section{Tensor estimates}
\label{sec:tensor-estimates}\label{sec:3}

Throughout this section, $S>n$ is constant and $q,t$ are given by \eqref{eq:2.5}.

\subsection{A symmetrized Hessian estimate}
Define the full symmetrization of the Hessian by
\begin{equation}
 U_{ijkl}=\frac14(h_{ijkl}+h_{jkli}+h_{klij}+h_{lijk}).
 \label{eq:3.1}
\end{equation}
The symmetry of $h_{ijk}$ implies that $U$ is fully symmetric.

\begin{lemma}[Hessian decomposition]\label{lem:hessian-decomposition}\label{lem:3.1}
The tensors $\nabla^2 h$ and $U$ satisfy
\begin{equation}
 |\nabla^2 h|^2=|U|^2+\frac34G.
 \label{eq:3.2}
\end{equation}
\end{lemma}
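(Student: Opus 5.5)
The plan is to split $\nabla^2h$ into its fully symmetric part $U$ and a remainder $E$. The Ricci identity \eqref{eq:2.3} should express $E$ entirely in terms of curvature, so that $|E|^2$ is an explicit function of the principal curvatures, and we then identify it with $\frac34 G$ using \eqref{eq:1.10}.

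\emph{Step 1: the remainder in terms of commutators.} By the Codazzi equation \eqref{eq:2.2}, $h_{ijk}$ is fully symmetric, so $h_{ijkl}$ is symmetric in its first three indices. Put
\[
 T_{ijkl}=h_{ijkl}-h_{ijlk}=\sum_m h_{im}R_{mjkl}+\sum_m h_{mj}R_{mikl}.
\]
Using symmetry in the first three slots, each term in \eqref{eq:3.1} can be rewritten as $h_{ijkl}$ minus a single commutator. For example,
\[
 h_{lijk}=h_{ijlk}=h_{ijkl}-T_{ijkl},
\]
and similarly
\[
 h_{klij}=h_{ijkl}-T_{ikjl},\qquad h_{jkli}=h_{ijkl}-T_{jkil}.
\]
Hence $h_{ijkl}=U_{ijkl}+E_{ijkl}$ with
\[
 E_{ijkl}=\frac14\bigl(T_{ijkl}+T_{ikjl}+T_{jkil}\bigr).
\]

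\emph{Step 2: orthogonality.} Since $h_{ijkl}$ is symmetric in $i,j,k$, its average over all of $S_4$ equals the average over the four cyclic positions of the last index. That average is exactly $U$, so the full symmetrization of $E$ vanishes. For the fully symmetric tensor $U$ we have $\langle U,E\rangle=\langle U,\operatorname{Sym}E\rangle=0$. Therefore
\[
 |\nabla^2h|^2=|U|^2+|E|^2,
\]
and it remains to show $|E|^2=\frac34G$.

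\emph{Step 3: computing $|E|^2$ in a principal frame.} Work in a frame with $h_{ij}=\lambda_i\delta_{ij}$. By \eqref{eq:2.1},
\[
 T_{ijkl}=K_{ij}\,(\delta_{ik}\delta_{jl}-\delta_{il}\delta_{jk}),\qquad K_{ij}=(\lambda_i-\lambda_j)(1+\lambda_i\lambda_j),
\]
where $K$ is antisymmetric and vanishes on the diagonal. Each of the three $T$-terms in $E$ is supported on index quadruples of type $\{a,a,b,b\}$ with $a\neq b$. For fixed $a\neq b$, we list the components of $E$ for all placements of the indices, namely patterns such as $(a,b,a,b)$, $(a,b,b,a)$, $(a,a,b,b)$, and so on, and record the coefficient of $K_{ab}$ in each. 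Each component should equal $\pm\frac14K_{ab}$, $\pm\frac12K_{ab}$, or $0$. Summing squares over all ordered pairs should give
\[
 |E|^2=\frac34\sum_{a,b}K_{ab}^2=\frac34 G,
\]
by the definition \eqref{eq:1.10}.

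\emph{Main obstacle.} The one real difficulty is this bookkeeping. We must correctly evaluate the three shifted terms $T_{ikjl}$ and $T_{jkil}$ on each pattern, taking care with signs from the antisymmetry of $K$. We must also make sure that no pattern is double counted between the ordered pairs $(a,b)$ and $(b,a)$. I would first tabulate the six arrangements of $a,a,b,b$ in the slots $(i,j,k,l)$, compute $E$ on each, and then sum.
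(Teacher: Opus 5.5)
Your proposal is correct and follows essentially the paper's argument: $U$ is the orthogonal projection of $\nabla^2h$ onto fully symmetric tensors, the remainder $E$ is supported on index patterns $\{a,a,b,b\}$, and one sums the six arrangements for each unordered pair $\{a,b\}$. When you carry out the tabulation, all six components equal $\pm\frac12K_{ab}$, none being $\pm\frac14K_{ab}$ or $0$. The paper sees this at once: $\nabla^2h$ takes the value $h_{aabb}$ on three arrangements and $h_{bbaa}$ on the other three, $U$ is their mean, and $h_{aabb}-h_{bbaa}=K_{ab}$, so each unordered pair contributes $\frac32K_{ab}^2$ and the total is $\frac34G$.
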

\begin{proof}
At a principal frame, \eqref{eq:2.3} becomes
\begin{equation}
 h_{ijkl}-h_{ijlk}=(\lambda_i-\lambda_j)(1+\lambda_i\lambda_j)
 (\delta_{ik}\delta_{jl}-\delta_{il}\delta_{jk}).
 \label{eq:3.3}
\end{equation}
Since $\nabla^2 h$ is symmetric in its first three indices, \eqref{eq:3.1} is its orthogonal projection onto the fully symmetric tensors. The only possible nonzero components of $\nabla^2 h-U$ have two occurrences of each of two distinct indices. For $i\ne k$, put
\[
 r_{ik}=h_{iikk}-h_{kkii}=(\lambda_i-\lambda_k)(1+\lambda_i\lambda_k).
\]
Among the six ordered arrangements of $i,i,k,k$, three Hessian components equal $h_{iikk}$ and three equal $h_{kkii}$. Their symmetrized value is their arithmetic mean. Their contribution to $|\nabla^2 h-U|^2$ is therefore $6r_{ik}^2/4$. Summing over $i<k$ gives $3G/4$, proving \eqref{eq:3.2}. This is the four-cycle decomposition used in \cite[\S3]{ref4}.
\end{proof}

\begin{proposition}[Refined Hessian estimate]\label{prop:refined-hessian}\label{prop:3.2}
One has
\begin{equation}
 |\nabla^2 h|^2\ge 2SP-2\mathcal{A}
 +\frac{2t(1+t)S^3}{n}+\frac{4J}{tS^2}
 +\frac{2f_3}{S}\mathcal{C}+\frac{\mathcal{C}^2}{S^2}.
 \label{eq:3.4}
\end{equation}
\end{proposition}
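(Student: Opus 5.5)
The plan is to reduce \eqref{eq:3.4} to a lower bound for $|U|^2$ and then obtain that bound from a Bessel-type inequality. By Lemma~\ref{lem:3.1} and Lemma~\ref{lem:2.2},
\[
 |\nabla^2h|^2=|U|^2+\tfrac32 SP+\tfrac{3t^2S^3}{2n},
\]
since $\frac{2S}{n}(S-n)^2=\frac{2t^2S^3}{n}$. So it suffices to show
\[
 |U|^2\ge \tfrac12 SP-2\mathcal A+\tfrac{(4t+t^2)S^3}{2n}+\tfrac{4J}{tS^2}+\tfrac{2f_3}{S}\mathcal C+\tfrac{\mathcal C^2}{S^2}.
\]
I would get this as a sum of squared projections of $U$ onto a few mutually orthogonal fully symmetric test tensors, each estimated by Cauchy--Schwarz $|U|^2\ge\sum_\alpha\langle U,W_\alpha\rangle^2/|W_\alpha|^2$, plus the leftover obtained by discarding nonnegative pieces.

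\emph{Step 1: the contractions of $U$.} Differentiate the pointwise constraints \eqref{eq:2.6}. Covariantly differentiating $\sum_i h_{iik}=0$ and $\sum_{i,j}h_{ij}h_{ijk}=0$ gives, at a principal frame,
\[
 \sum_i h_{iikl}=0,\qquad \sum_i\lambda_ih_{iikl}=-T_{kl},\qquad T_{kl}:=\sum_{i,j}h_{ijk}h_{ijl}.
\]
Here $\tr T=q=tS^2$ and $\sum_k\lambda_kT_{kk}=\mathcal C$. Simons' identity gives $\sum_kh_{ijkk}=(n-S)\lambda_i\delta_{ij}$. I would pass these to $U$ using \eqref{eq:3.3}: the correction terms are the explicit quantities $(\lambda_i-\lambda_k)(1+\lambda_i\lambda_k)$. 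The resulting contractions of $U$ against $I$ and against $A$ (in two slots) are then known functions of $\lambda$, $T$, and the $r_{ik}$.

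\emph{Step 2: the test tensors.} I would take $W_1=\mathrm{Sym}(I\otimes\Phi)$-type and $W_2=\mathrm{Sym}(A\otimes A)$-type tensors, orthogonalized using \eqref{eq:2.14}. Pairing $U$ with these produces the curvature contribution $\tfrac12SP$ together with the $t$-dependent $S^3/n$ terms. The $T$-contributions, via $\sum_k\lambda_kT_{kk}=\mathcal C$ and $\tr T=tS^2$, produce the combination $\bigl(\mathcal C+f_3\cdot(\dots)\bigr)^2/S^2$. Expanding the square should give $\mathcal C^2/S^2+2f_3\mathcal C/S$ plus a term absorbed into the constants. The $-2\mathcal A$ term should enter when $\sum_{k,l}T_{kl}^2$ is bounded using $T_{kk}$ and $\lambda$-weighted sums: concretely $\sum_{i,j,k}\lambda_i^2h_{ijk}^2=\mathcal A$ appears through $\sum_{k,l}\lambda_k\lambda_lT_{kl}$-type pairings.

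\emph{Step 3: the $J$-term.} This comes from a different test direction, the symmetrization of $\nabla h\otimes(\cdot)$, specifically the vector $D_l$. I would pair $U$ with $V_{ijkl}=\mathrm{Sym}(h_{ijk}\lambda_l^2D_l)$ and use $|\nabla h|^2=tS^2$ in the denominator, which produces the factor $4J/(tS^2)$. Orthogonality of this direction to $W_1,W_2$ must be checked using $\sum_ih_{iik}=\sum_i\lambda_ih_{iik}=0$. If exact orthogonality fails, I would instead apply Cauchy--Schwarz with optimized weights on the combined projection.

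\emph{Main obstacle.} I expect the hardest step to be computing $|W_\alpha|^2$ and the cross terms for the symmetrized tensors, especially the repeated-index components, while keeping the exact constants $2t(1+t)$ and $4$. If the orthogonal-projection route leaves slack, I would first try a single Cauchy--Schwarz against the linear combination $W=aW_1+bW_2+cV$, with $a,b,c$ optimized pointwise.
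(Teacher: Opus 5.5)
Your opening reduction is correct and is the paper's: by Lemmas~\ref{lem:3.1} and~\ref{lem:2.2} it suffices to prove $|U|^2\ge\frac12SP-2\mathcal{A}+\frac{(4t+t^2)S^3}{2n}+\frac{4J}{tS^2}+\frac{2f_3}{S}\mathcal{C}+\frac{\mathcal{C}^2}{S^2}$. Beyond that there is a genuine gap, because the test directions you name do not carry these terms.

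\emph{The non-$J$ terms.} The direction $\mathrm{Sym}(I\otimes\Phi)$ pairs to zero with $U$. The trace condition and Simons' identity give $\sum_iU_{iikk}=\frac12\bigl(\sum_ih_{iikk}+\sum_ih_{kkii}\bigr)=\frac12(n-S)\lambda_k$. Hence $\langle U,\mathrm{Sym}(I\otimes\Phi)\rangle=\langle U,I\otimes\Phi\rangle=\frac{n-S}{2}\langle A,\Phi\rangle=0$.

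In the paper, the terms $-2\mathcal{A}$, $\frac{2f_3}{S}\mathcal{C}$, $\frac12SP$ and $\frac{2tS^3}{n}$ all come from the single direction $X=A\otimes\Phi+\Phi\otimes A$. It satisfies $\langle U,X\rangle=2K$ with $K=-\mathcal{A}+\frac12SP+\frac{f_3}{S}\mathcal{C}+\frac{tS^3}{n}$, and computing $K$ needs the commutator contraction \eqref{eq:3.9}. These terms enter \emph{linearly}, through $|U-\frac12X|^2\ge0$, i.e.\ $|U|^2\ge2K-\frac12SP$ (since $|X|^2=2SP$). They do not come from expanding a square built on $\mathcal{C}$. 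The sign-indefinite $\frac{2f_3}{S}\mathcal{C}$ is produced by the $-\frac{f_3}{S}A$ part of $\Phi$ inside $X$. The term $\mathcal{C}^2/S^2$ comes separately from $Z=A\otimes A$, with $\langle U,Z\rangle=-\mathcal{C}$ and $|Z|^2=S^2$. The remaining $\frac{t^2S^3}{2n}$ requires one more direction, $W=A\otimes I+I\otimes A$ with $\langle U,W\rangle=-q$ and $|W|^2=2nS$, which your plan does not contain.

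\emph{The $J$ term.} The identity missing from your Step~1 is $\sum_{i,j,k}U_{ijkl}h_{ijk}=-\frac32\lambda_lD_l$. It comes from differentiating the constant $q=|\nabla h|^2$ (this is \eqref{eq:3.6}) and commuting with \eqref{eq:3.3}. For $Y(\xi)_{ijkl}=\xi_ih_{jkl}+\xi_jh_{ikl}+\xi_kh_{ijl}+\xi_lh_{ijk}$ it gives $\langle U,Y(\xi)\rangle=-6\sum_l\xi_l\lambda_lD_l$. So the weight must be $\xi_l\propto\lambda_lD_l$; your weight $\lambda_l^2D_l$ produces $\sum_l\lambda_l^3D_l^2$, not $J$.

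Even with the correct weight, projecting onto $Y(\xi)$ as a separate orthogonal direction gives only $36J^2/(16qJ)=\frac{9J}{4q}$. This uses the available bound $|Y(\xi)|^2=4q|\xi|^2+12\sum_{k,l}\xi_k\xi_lT_{kl}\le16q|\xi|^2$, where $T_{kl}=\sum_{i,j}h_{ijk}h_{ijl}$ is your positive semidefinite matrix with trace $q$. The constant $4$ relies on the nonvanishing cross term $\langle X,Y(\xi)\rangle=4\sum_l\xi_l\lambda_lD_l$. The paper expands $0\le|U-\frac12X+Y(\xi)+\gamma Z+\eta W|^2$ with $\xi_l=\lambda_lD_l/(2q)$, $\gamma=\mathcal{C}/S^2$, $\eta=q/(2nS)$. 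The $J$-contributions then add up to $\frac{4J}{q}-\frac{6J}{q}-\frac{2J}{q}=-\frac{4J}{q}$. So $X$ and $Y(\xi)$ must be coupled rather than treated as independent orthogonal directions.

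Finally, the norm computations you flag as the main obstacle are unnecessary. Since $|U-V|^2\ge0$ holds for every $V$, the paper uses the unsymmetrized $X$, $Z$, $W$, whose norms $2SP$, $S^2$, $2nS$ are immediate.
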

\begin{proof}
Put $a=f_3/S$ and identify $\Phi$ with its symmetric bilinear form. Differentiating $S$ twice, the trace of $h$ twice, and $q$ once gives
\begin{align}
 \sum_i\lambda_i h_{iikk}&=-\sum_{i,j}h_{ijk}^2,
 &\sum_i h_{iikk}&=0,\label{eq:3.5}\\
 \sum_{i,j,k}h_{ijk}h_{ijkl}&=0.&&\label{eq:3.6}
\end{align}
Consequently,
\begin{align}
 \sum_{i,k}\lambda_i\lambda_k h_{iikk}&=-\mathcal{C},
 &\sum_{i,k}\lambda_i\lambda_k^2 h_{iikk}&=-\mathcal{A},
 \label{eq:3.7}\\
 \sum_{i,k}\lambda_i h_{iikk}&=-q,
 &\sum_{i,k}\lambda_k h_{iikk}&=0.
 \label{eq:3.8}
\end{align}
Commuting the two pairs of indices gives the remaining contraction:
\begin{equation}
 \sum_{i,k}\lambda_i^2\lambda_k h_{iikk}
 =-\mathcal{A}+SP+\frac{tS^3}{n}.
 \label{eq:3.9}
\end{equation}
Indeed, the difference between the left side of \eqref{eq:3.9} and the second sum in \eqref{eq:3.7} is
\[
 \sum_{i,k}\lambda_i^2\lambda_k(\lambda_i-\lambda_k)(1+\lambda_i\lambda_k)
 =Sf_4-f_3^2-S^2=SP+\frac{tS^3}{n}.
\]
It follows that
\begin{equation}
 K:=\langle U,h\otimes\Phi\rangle
 =-\mathcal{A}+\frac12SP+a\mathcal{C}+\frac{tS^3}{n}.
 \label{eq:3.10}
\end{equation}
For example, $U_{iikk}=(h_{iikk}+h_{kkii})/2$; substituting $\varphi_k=\lambda_k^2-a\lambda_k-S/n$ into $\sum_{i,k}U_{iikk}\lambda_i\varphi_k$ and using \eqref{eq:3.7}--\eqref{eq:3.9} gives \eqref{eq:3.10}.

In the space of fourth-order tensors, set
\[
\begin{aligned}
 X_{ijkl}&=h_{ij}\Phi_{kl}+\Phi_{ij}h_{kl},
 &Z_{ijkl}&=h_{ij}h_{kl},\\
 W_{ijkl}&=h_{ij}\delta_{kl}+\delta_{ij}h_{kl},
\end{aligned}
\]
and, for a covector $\xi=(\xi_l)$, set
\[
 Y(\xi)_{ijkl}=\xi_i h_{jkl}+\xi_j h_{ikl}+\xi_k h_{ijl}+\xi_l h_{ijk}.
\]
Direct contractions using \eqref{eq:2.6} and \eqref{eq:2.14} give
\begin{align}
 |X|^2&=2SP,& |Z|^2&=S^2,& |W|^2&=2nS,
 \label{eq:3.11}\\
 \langle U,X\rangle&=2K,&\langle U,Z\rangle&=-\mathcal{C},&
 \langle U,W\rangle&=-q.
 \label{eq:3.12}
\end{align}
Among $X,Y(\xi),Z,W$, all distinct pairwise inner products vanish except
\begin{equation}
 \langle X,Y(\xi)\rangle=4\sum_l\xi_l\lambda_lD_l.
 \label{eq:3.13}
\end{equation}
Here $D_l=\sum_i\varphi_i h_{iil}$ by \eqref{eq:2.6}.

We verify the contraction with $U$ that is sensitive to the variation of $f_3$. From \eqref{eq:3.3} and \eqref{eq:2.6},
\[
 \sum_{i,j,k}h_{ijk}(h_{ijkl}-h_{ijlk})
 =2\sum_i(\lambda_i-\lambda_l)(1+\lambda_i\lambda_l)h_{iil}
 =2\lambda_lD_l.
\]
The first summand on the left vanishes by \eqref{eq:3.6}. Full symmetry in the first three indices now yields
\[
 \sum_{i,j,k}U_{ijkl}h_{ijk}
 =\frac34\sum_{i,j,k}h_{ijlk}h_{ijk}=-\frac32\lambda_lD_l.
\]
Thus
\begin{equation}
 \langle U,Y(\xi)\rangle=-6\sum_l\xi_l\lambda_lD_l.
 \label{eq:3.14}
\end{equation}
If $\mathsf{R}_{lm}=\sum_{i,j}h_{lij}h_{mij}$, then $\mathsf{R}$ is positive semidefinite with trace $q$. Expanding the four terms in $Y$ therefore gives
\begin{equation}
 |Y(\xi)|^2=4q|\xi|^2+12\xi^{\mathsf{T}}\mathsf{R}\xi\le16q|\xi|^2.
 \label{eq:3.15}
\end{equation}
Choose the fixed coefficients
\begin{equation}
 \xi_l=\frac{\lambda_lD_l}{2q},\qquad
 \gamma=\frac{\mathcal{C}}{S^2},\qquad
 \eta=\frac{q}{2nS}.
 \label{eq:3.16}
\end{equation}
Expand $|U-X/2+Y(\xi)+\gamma Z+\eta W|^2$ using \eqref{eq:3.11}--\eqref{eq:3.15}. Since $\sum_l\xi_l\lambda_lD_l=J/(2q)$ and $16q|\xi|^2=4J/q$, one obtains
\begin{equation}
\begin{aligned}
 0&\le |U-X/2+Y(\xi)+\gamma Z+\eta W|^2\\
  &\le |U|^2+\frac12SP-2K-\frac{4J}{q}-\frac{\mathcal{C}^2}{S^2}-\frac{q^2}{2nS}.
\end{aligned}
 \label{eq:3.17}
\end{equation}
Hence \eqref{eq:3.10} gives
\begin{equation}
 |U|^2\ge\frac12SP-2\mathcal{A}+2a\mathcal{C}
 +\frac{2tS^3}{n}+\frac{4J}{q}+\frac{\mathcal{C}^2}{S^2}+\frac{q^2}{2nS}.
 \label{eq:3.18}
\end{equation}
Finally, add $3G/4$ by Lemma~3.1 and use Lemma~2.2. The two additional scalar terms are
\[
 \frac{q^2}{2nS}+\frac{3S}{2n}(S-n)^2=\frac{2t^2S^3}{n}.
\]
This proves \eqref{eq:3.4}. Only $q>0$ and $S>0$ were used in the denominators, so the proof also applies at points where $P=0$.
\end{proof}

\subsection{A refined cubic projection}
The following algebraic estimate is the diagonal-correction inequality of Cheng--Wei \cite[Lemma~3.5]{ref3}. We give a projection proof to fix the normalization.

\begin{lemma}[Cubic projection]\label{lem:cubic-projection}\label{lem:3.3}
One has
\begin{equation}
 \mathcal{A}+2\mathcal{B}\ge\frac{3\mathcal{C}^2}{q}
 +4\sum_l\frac{D_l^2}{S+2\lambda_l^2}.
 \label{eq:3.19}
\end{equation}
\end{lemma}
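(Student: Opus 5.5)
The plan is to write $\mathcal{A}+2\mathcal{B}$ as a weighted squared norm and bound it below by orthogonal projections. By full symmetry of $h_{ijk}$, averaging over the three positions gives
\[
 \mathcal{A}+2\mathcal{B}=\frac13\sum_{i,j,k}(\lambda_i+\lambda_j+\lambda_k)^2h_{ijk}^2=\frac13|T|^2,
 \qquad T_{ijk}=(\lambda_i+\lambda_j+\lambda_k)h_{ijk}.
\]
In the same way, $\mathcal{C}=\frac13\sum_{i,j,k}(\lambda_i+\lambda_j+\lambda_k)h_{ijk}^2=\frac13\langle T,\nabla h\rangle$. Since $|\nabla h|^2=q$, Cauchy--Schwarz against $\nabla h$ already gives $|T|^2\ge 9\mathcal{C}^2/q$, that is, the first term $3\mathcal{C}^2/q$. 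The goal is to add, orthogonally, a further projection onto test tensors that detect the $D_l$.

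For each $l$, I will use a fully symmetric test tensor $E^{(l)}$ supported on the index patterns $\{i,i,l\}$ with $i$ arbitrary. Its values on the six, three or one arrangements are fixed by a vector $c^{(l)}_i$, chosen so that $\langle T,E^{(l)}\rangle$ is a multiple of $D_l$. Concretely, the pairing is $\sum_i c_i(2\lambda_i+\lambda_l)h_{iil}$ with the usual multiplicities. By \eqref{eq:2.6}, the expression $\sum_i(\lambda_i^2+\alpha\lambda_i+\beta)h_{iil}$ equals $D_l$ for every choice of $\alpha$ and $\beta$. The supports of different $E^{(l)}$ overlap only in controlled ways: the component $\{i,i,l\}$ is shared with $E^{(i)}$ only when the pattern is $\{i,l,l\}$. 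I will therefore split the repeated-index components among the $l$ so that each component is used exactly once, for instance by symmetrizing the split. Applying weighted Cauchy--Schwarz on each piece gives
\[
 \sum_i w_i h_{iil}^2\ \ge\ \frac{\bigl(\sum_i c_ih_{iil}\bigr)^2}{\sum_i c_i^2/w_i},
 \qquad w_i\propto(2\lambda_i+\lambda_l)^2 .
\]
I will then optimize over $c_i=\lambda_i^2+\alpha\lambda_i+\beta$ divided by the weight. The expected outcome is the denominator $\frac34(S+2\lambda_l^2)$, or whatever normalization produces the factor $4$ in \eqref{eq:3.19}. Natural first choices are $c_i\propto 2\lambda_i+\lambda_l$ and $\beta$ tuned to kill the $\sum_i\lambda_i=0$ cross terms. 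With these choices, $\sum_i(2\lambda_i+\lambda_l)^2$ evaluates to $4S+n\lambda_l^2$ plus corrections. The factor $S+2\lambda_l^2$ should then appear after the multiplicity bookkeeping for the $i=l$ component, which carries weight $9\lambda_l^2$.

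The main obstacle is that these projections are not orthogonal to the first one: $\nabla h$ has nonzero components on the repeated-index patterns. So the $\mathcal{C}^2/q$ term and the $D_l$ terms cannot simply be added. I would handle this exactly as in Proposition~\ref{prop:3.2}: expand
\[
 \Bigl|T-\sigma\nabla h-\sum_l\tau_lE^{(l)}\Bigr|^2\ge0
\]
with explicit coefficients, and compute the cross terms $\langle\nabla h,E^{(l)}\rangle=\sum_ic_i h_{iil}$ (times multiplicities). Using the freedom in $\alpha,\beta$ together with $\sum_ih_{iil}=\sum_i\lambda_ih_{iil}=0$, I would choose $c^{(l)}$ so that these cross terms vanish identically. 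This makes the decomposition orthogonal, and the two lower bounds then add to give \eqref{eq:3.19}. If exact orthogonality fails, the fallback is a Schur-complement or Gram-determinant estimate in which the off-diagonal terms are dominated. This is the step where I expect the exact constant $4/(S+2\lambda_l^2)$ to be fixed.
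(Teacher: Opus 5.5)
You have the same architecture as the paper: Bessel's inequality for $T_{ijk}=(\lambda_i+\lambda_j+\lambda_k)h_{ijk}$ (the paper's $V$) against $\nabla h$ and against test tensors on repeated-index components, made orthogonal to $\nabla h$ via \eqref{eq:2.6}. But there is a genuine gap. The step that produces the constant is left open, and the concrete choices you name do not work. Normalize $E^{(l)}$ to equal $c_i/3$ on each arrangement of $(i,i,l)$ with $i\neq l$, and $c_l$ at $(l,l,l)$. Then $\langle X,E^{(l)}\rangle=\sum_ic_iX_{iil}$ for symmetric $X$, and $|E^{(l)}|^2=\frac13\sum_{i\neq l}c_i^2+c_l^2$.

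The cross term $\langle\nabla h,E^{(l)}\rangle=\sum_ic_ih_{iil}$ vanishes by \eqref{eq:2.6} only when $c\in\operatorname{span}\{\mathbf{1},\boldsymbol{\lambda}\}$. For your family $c_i=\lambda_i^2+\alpha\lambda_i+\beta$ it equals $D_l$ for every $\alpha,\beta$, so that freedom cannot produce orthogonality. Your ``natural first choice'' $c_i=2\lambda_i+\lambda_l$ is admissible and gives $\langle T,E^{(l)}\rangle=\sum_i(2\lambda_i+\lambda_l)^2h_{iil}=4D_l$. However, $|E^{(l)}|^2=\frac13\bigl(4S+(n-9)\lambda_l^2\bigr)+9\lambda_l^2=\frac13\bigl(4S+(n+18)\lambda_l^2\bigr)$. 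After dividing by $3$ you therefore get only $4D_l^2/(S+\frac{n+18}{4}\lambda_l^2)$. This is strictly weaker than \eqref{eq:3.19} whenever $\lambda_lD_l\neq0$, and no multiplicity bookkeeping turns it into $S+2\lambda_l^2$.

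The missing choice is $c_i=\lambda_i$, that is, $E^{(l)}_{ijk}=\frac13(\lambda_i\delta_{ij}\delta_{kl}+\lambda_j\delta_{jk}\delta_{il}+\lambda_k\delta_{ki}\delta_{jl})$. Then $c_i(2\lambda_i+\lambda_l)=2\lambda_i^2+\lambda_l\lambda_i$, so \eqref{eq:2.6} gives three facts:
\begin{itemize}
\item[(i)] $\langle\nabla h,E^{(l)}\rangle=\sum_i\lambda_ih_{iil}=0$;
\item[(ii)] $\langle T,E^{(l)}\rangle=2D_l$;
\item[(iii)] $|E^{(l)}|^2=\frac13(S-\lambda_l^2)+\lambda_l^2=\frac13(S+2\lambda_l^2)$.
\end{itemize}

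Your concern about overlapping supports is unfounded. Every component in the support of $E^{(l)}$ has index multiset $\{i,i,l\}$ with $i\neq l$, or $\{l,l,l\}$. In both cases $l$ is the unique index of odd multiplicity, so $E^{(l)}\perp E^{(m)}$ for $l\neq m$ with no splitting. The per-block weighted Cauchy--Schwarz (whose weights can vanish when $2\lambda_i+\lambda_l=0$) and the Schur-complement fallback are therefore unnecessary. Bessel gives $|T|^2\geq9\mathcal{C}^2/q+12\sum_lD_l^2/(S+2\lambda_l^2)$, which is \eqref{eq:3.19} after division by $3$.

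Within the admissible family $c_i=\lambda_i+s$, the value $s=0$ already suffices. Your choice corresponds, after rescaling, to $s=\lambda_l/2$, which moves away from the optimum $s=-2\lambda_l/(n+2)$; that optimum would give the slightly sharper denominator $S+\frac{2n}{n+2}\lambda_l^2$.
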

\begin{proof}
In the space of fully symmetric cubic tensors, let
\[
 T_{ijk}=h_{ijk},\qquad V_{ijk}=(\lambda_i+\lambda_j+\lambda_k)h_{ijk},
\]
and define, for $1\le l\le n$,
\[
 E_{ijk}^{(l)}=\frac13\bigl(\lambda_i\delta_{ij}\delta_{kl}
 +\lambda_j\delta_{jk}\delta_{il}+\lambda_k\delta_{ki}\delta_{jl}\bigr).
\]
Equation \eqref{eq:2.6} implies $\langle T,E^{(l)}\rangle=0$. Direct summation gives
\[
\begin{aligned}
 |V|^2&=3(\mathcal{A}+2\mathcal{B}),
 &\langle V,T\rangle&=3\mathcal{C},\\
 \langle V,E^{(l)}\rangle&=2D_l,
 &\langle E^{(l)},E^{(m)}\rangle&=\delta_{lm}\frac{S+2\lambda_l^2}{3}.
\end{aligned}
\]
Apply Bessel's inequality to the mutually orthogonal nonzero tensors $T,E^{(1)},\ldots,E^{(n)}$. Since $|T|^2=q$, it gives
\[
 3(\mathcal{A}+2\mathcal{B})\ge\frac{9\mathcal{C}^2}{q}
 +12\sum_l\frac{D_l^2}{S+2\lambda_l^2}.
\]
Division by $3$ proves the assertion.
\end{proof}

\subsection{The repeated-index estimate}
The two constraints in \eqref{eq:2.6} give a useful bound on the contraction $D_k$ when the multiplicities of the repeated indices are retained.

\begin{lemma}[Weighted repeated-index estimate]\label{lem:weighted-repeated-index}\label{lem:3.4}
Set
\begin{equation}
 x_k=\frac{\lambda_k^2}{S},\quad
 d_k=\frac{D_k}{S^2},\quad
 Q_k=h_{kkk}^2+3\sum_{i\ne k}h_{iik}^2,\quad
 \theta_k=\frac{Q_k}{S^2}.
 \label{eq:3.20}
\end{equation}
Then
\begin{equation}
 d_k^2\le\frac{np}{n+2+2nx_k}\theta_k,\qquad
 \sum_k\theta_k\le t.
 \label{eq:3.21}
\end{equation}
\end{lemma}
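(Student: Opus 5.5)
The plan is to fix $k$ and treat $D_k$ as a linear functional of the vector $y=(y_i)$, $y_i=h_{iik}$. Then $Q_k$ becomes a positive definite quadratic form in $y$ on a constrained subspace, and the sharp Cauchy--Schwarz bound follows by inverting that form.

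\emph{Setting up the constrained problem.} By \eqref{eq:2.6}, $y$ lies in $W=\{\mathbf 1,\lambda\}^{\perp}\subset\mathbb R^n$. Also $D_k=\sum_i\lambda_i^2y_i=\langle\varphi,y\rangle$ with $\varphi_i$ as in \eqref{eq:2.15}. The vector $\varphi$ itself lies in $W$ by \eqref{eq:2.14}, and $|\varphi|^2=P$.

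Next I write $Q_k=3|y|^2-2y_k^2$. Let $u$ be the orthogonal projection of $e_k$ onto $W$, namely
\[
 u_i=\delta_{ik}-\frac1n-\frac{\lambda_i\lambda_k}{S},\qquad |u|^2=1-\frac1n-x_k .
\]
For $y\in W$ we have $y_k=\langle u,y\rangle$, so on $W$
\[
 Q_k=\langle (3I-2uu^{\mathsf T})y,y\rangle .
\]
Since $|u|^2<1$, this form is positive definite on $W$. In particular $Q_k=0$ forces $y=0$ and hence $D_k=0$, so that case is trivial.

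\emph{Sharp Cauchy--Schwarz.} For a positive definite form $M$ on $W$ and $\varphi\in W$, one has $\langle\varphi,y\rangle^2\le\langle M^{-1}\varphi,\varphi\rangle\,\langle My,y\rangle$. By Sherman--Morrison,
\[
 (3I-2uu^{\mathsf T})^{-1}=\tfrac13\Bigl(I+\frac{2uu^{\mathsf T}}{3-2|u|^2}\Bigr),
\]
and $3-2|u|^2=(n+2+2nx_k)/n$. Using $\langle\varphi,u\rangle=\varphi_k$, this gives
\[
 D_k^2\le\Bigl(\frac P3+\frac{2n\varphi_k^2}{3(n+2+2nx_k)}\Bigr)Q_k .
\]

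\emph{Comparing with the target.} We want the bracket to be at most $nP/(n+2+2nx_k)$. Clearing denominators, this is equivalent to $\varphi_k^2\le(1-\tfrac1n-x_k)P=|u|^2|\varphi|^2$. That holds by ordinary Cauchy--Schwarz, because $\varphi_k=\langle\varphi,u\rangle$ with $\varphi\in W$. Dividing by $S^4$ then yields the first inequality in \eqref{eq:3.21}.

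\emph{The trace bound.} In $q=\sum_{i,j,l}h_{ijl}^2$, group the components by index pattern. The pattern $(k,k,k)$ contributes $h_{kkk}^2$. For $i\ne k$, the pattern $\{i,i,k\}$ occurs in three orderings, so it contributes $3h_{iik}^2$ and is counted exactly once, in $Q_k$. The patterns with three distinct indices contribute nonnegatively. Hence $\sum_kQ_k\le q=tS^2$, which gives $\sum_k\theta_k\le t$.

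The main obstacle is choosing the correct quadratic form. One must retain the multiplicity weight $3$ on the off-diagonal repeated components together with both trace constraints; otherwise the factor $n+2+2nx_k$ does not appear. The verification through the projection $u$ then reduces everything to a single Cauchy--Schwarz step.
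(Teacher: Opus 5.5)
Your proposal is correct and follows the paper's proof essentially step for step. It uses the same subspace $\{\mathbf{1},\boldsymbol{\lambda}\}^{\perp}$, the same projection $u$ of the $k$th coordinate vector, and the same form $3I-2u\otimes u$ inverted by Sherman--Morrison; it then applies the bound $\varphi_k^2\le|u|^2P$ and the same multiplicity count to get $\sum_kQ_k\le q$.
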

\begin{proof}
Fix $k$ and let $E=\{\mathbf{1},\boldsymbol{\lambda}\}^{\perp}\subset\mathbb{R}^n$, where $\boldsymbol{\lambda}=(\lambda_1,\ldots,\lambda_n)$. The two spanning vectors of $E^{\perp}$ are orthogonal and have squared norms $n$ and $S$. Both
\[
 v=(h_{11k},\ldots,h_{nnk})\quad\text{and}\quad
 \varphi=(\varphi_1,\ldots,\varphi_n)
\]
belong to $E$, and $D_k=\langle\varphi,v\rangle$. Let $u$ be the orthogonal projection of the $k$th coordinate vector onto $E$. Then
\begin{equation}
 |u|^2=1-\frac1n-x_k,\qquad
 Q_k=3|v|^2-2\langle u,v\rangle^2.
 \label{eq:3.22}
\end{equation}
The positive definite operator $W_k=3I_E-2u\otimes u$ represents this quadratic form on $E$. Weighted Cauchy--Schwarz gives
\begin{equation}
 D_k^2\le\langle\varphi,W_k^{-1}\varphi\rangle\,Q_k.
 \label{eq:3.23}
\end{equation}
The inverse of a rank-one perturbation satisfies
\[
 W_k^{-1}=\frac13I_E+\frac{2}{3(3-2|u|^2)}\,u\otimes u.
\]
Since $\langle u,\varphi\rangle=\varphi_k$ and $|\varphi|^2=P$, this yields
\begin{equation}
\begin{aligned}
 \langle\varphi,W_k^{-1}\varphi\rangle
 &=\frac{P}{3}+\frac{2n\varphi_k^2}{3(n+2+2nx_k)}\\
 &\le\frac{P}{3}+\frac{2n|u|^2P}{3(n+2+2nx_k)}
 =\frac{nP}{n+2+2nx_k}.
\end{aligned}
 \label{eq:3.24}
\end{equation}
The last equality uses $n+2+2nx_k+2n|u|^2=3n$. Equations \eqref{eq:3.23} and \eqref{eq:3.24}, divided by $S^4$, prove the first assertion in \eqref{eq:3.21}.

The quantities $Q_k$ count disjoint ordered components of the symmetric tensor $h_{ijk}$ with at least two equal indices. Therefore
\[
 \sum_k Q_k=\sum_k h_{kkk}^2+3\sum_{i\ne k}h_{iik}^2
 \le\sum_{i,j,k}h_{ijk}^2=q=tS^2,
\]
which proves the second assertion.
\end{proof}

\section{The integral inequality and second pinching}
\label{sec:4}

\subsection{The mixed moment}
Besides \eqref{eq:2.17} and \eqref{eq:3.20}, use the normalized scalars
\begin{equation}
\alpha=\frac{\mathcal{A}}{S^3},\quad
\beta=\frac{\mathcal{B}}{S^3},\quad
e=\frac{f_3\mathcal{C}}{S^4},\quad
w=\frac{\mathcal{C}^2}{S^5},\quad
d=\sum_i d_i^2,\quad j=\sum_i x_i d_i^2.
\label{eq:4.1}
\end{equation}
Notice that $e^2=zw$. Combining \eqref{eq:3.4} with \eqref{eq:2.13} gives
\begin{equation}
H:=5\alpha-6\beta-2p-\frac{4}{t}j-2e-w+t(2t-1)-\frac{t(5-t)}{n}\geq0.
\label{eq:4.2}
\end{equation}
For the scalar terms, one uses
\[
\frac{q(S-2n-3)}{S^3}=t(2t-1)-\frac{3t(1-t)}{n}.
\]
For a real constant $c$, define
\begin{equation}
F_c=\frac{S}{4}f_4-\frac{c}{6}f_3^2,\qquad
L_c=\frac{\Delta F_c}{S^4}.
\label{eq:4.3}
\end{equation}
Lemma~\ref{lem:2.1} and constancy of $S$ give
\begin{equation}
L_c=2\alpha+\beta-t\left(p+(1-c)z+\frac1n\right)-2ce-3cd.
\label{eq:4.4}
\end{equation}
Indeed, $\Delta(f_3^2)=2f_3\Delta f_3+2|\nabla f_3|^2$; thus the last term in \eqref{eq:4.4} retains the full contribution from $\nabla f_3$. Since $M$ is closed,
\begin{equation}
\langle L_c\rangle_M=0.
\label{eq:4.5}
\end{equation}
Fix
\begin{equation}
\kappa=\frac{16}{3},\qquad c=\frac{89}{67},\qquad
R=2\kappa c-2=\frac{2446}{201}.
\label{eq:4.6}
\end{equation}
It follows that
\begin{equation}
\mathcal{I}:=\langle H-\kappa L_c\rangle_M=\langle H\rangle_M\geq0.
\label{eq:4.7}
\end{equation}
This nonnegativity is an integral statement. No pointwise sign is asserted for $H-\kappa L_c$.

For $0<t<1$, set
\begin{align}
M_t(x)&=16c-\frac{4x}{t}-\frac{68}{3(1+2x)},\qquad x\geq0,
\label{eq:4.8}\\
\mathscr{R}(t)&=\sup_{x\geq0}\frac{(M_t(x))_+}{1+2x},
\label{eq:4.9}\\
\mathfrak{a}(t)&=\kappa t-2+t\mathscr{R}(t),\qquad
\mathfrak{b}(t)=\frac{t(292945-70752t)}{40401(t+17)}.
\label{eq:4.10}
\end{align}

\begin{proposition}[Integral estimate]\label{prop:4.1}
With the notation above, one has
\begin{equation}
0\leq\mathcal{I}\leq\mathfrak{a}(t)\langle p\rangle_M+\mathfrak{b}(t)\langle z\rangle_M+t(2t-1)+\frac{t(t+1/3)}{n}.
\label{eq:4.11}
\end{equation}
\end{proposition}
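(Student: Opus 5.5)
The plan is to compute the integrand $H-\kappa L_c$ pointwise, bound it above pointwise by $\mathfrak{a}(t)p+\mathfrak{b}(t)z+t(2t-1)+t(t+1/3)/n$, and then average over $M$ using \eqref{eq:4.7}. The lower bound $0\le\mathcal{I}$ is just \eqref{eq:4.7}.

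\emph{Step 1: collect terms.} Subtracting $\kappa$ times \eqref{eq:4.4} from \eqref{eq:4.2}, with $\kappa=16/3$, the $\alpha,\beta$ coefficients are $5-2\kappa=-17/3$ and $-6-\kappa=-34/3$. Hence they combine into $-\tfrac{17}{3}(\alpha+2\beta)$. The remaining terms are:
\begin{itemize}
\item the $e$-term $(2\kappa c-2)e=Re$;
\item the $z$-term $\kappa t(1-c)z=-\tfrac{352}{201}tz$;
\item the $p$-term $(\kappa t-2)p$;
\item the $d,j$-terms $3\kappa c\,d-\tfrac4t j=\sum_k d_k^2(16c-4x_k/t)$;
\item the terms $-w$ and $t(2t-1)-\tfrac{t(5-t)}{n}+\tfrac{\kappa t}{n}=t(2t-1)+\tfrac{t(t+1/3)}{n}$.
\end{itemize}

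\emph{Step 2: cubic projection.} Normalize Lemma~\ref{lem:3.3} by $S^3$, using $q=tS^2$. This gives $\alpha+2\beta\ge 3w/t+4\sum_k d_k^2/(1+2x_k)$. Therefore
\[
-\tfrac{17}{3}(\alpha+2\beta)\le-\tfrac{17}{t}w-\tfrac{68}{3}\sum_k\frac{d_k^2}{1+2x_k}.
\]
The $d$-terms now assemble exactly into $\sum_k d_k^2M_t(x_k)$ with $M_t$ as in \eqref{eq:4.8}.

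\emph{Step 3: absorb $e$.} The $w$-coefficient is now $-(1+17/t)=-(t+17)/t$. Since $e^2=zw$, completing the square gives
\[
Re-\tfrac{t+17}{t}w\le\frac{R^2t}{4(t+17)}z.
\]
Here $R^2/4=1223^2/201^2=1495729/40401$. Adding $-\tfrac{352}{201}tz=-\tfrac{70752\,t(t+17)}{40401(t+17)}z$ gives $\tfrac{t(1495729-1202784-70752t)}{40401(t+17)}z=\mathfrak{b}(t)z$. This is a routine arithmetic check.

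\emph{Step 4: the repeated-index term.} This is the main point, because $M_t$ may be positive for small $x$. Bound $\sum_k d_k^2M_t(x_k)\le\sum_k d_k^2(M_t(x_k))_+$. Then apply Lemma~\ref{lem:3.4}, weakened via $n+2+2nx_k\ge n(1+2x_k)$, to get $d_k^2\le p\,\theta_k/(1+2x_k)$. By the definition \eqref{eq:4.9} of $\mathscr{R}(t)$ and the bound $\sum_k\theta_k\le t$, the sum is at most $p\,\mathscr{R}(t)\sum_k\theta_k\le t\mathscr{R}(t)p$.

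Combining this with $(\kappa t-2)p$ gives $\mathfrak{a}(t)p$. So we obtain the pointwise inequality
\[
H-\kappa L_c\le\mathfrak{a}(t)p+\mathfrak{b}(t)z+t(2t-1)+\tfrac{t(t+1/3)}{n}.
\]
Since $t$ is constant, averaging over $M$ yields \eqref{eq:4.11}. The only delicate points are the sign handling through $(M_t)_+$, so that no sign of $M_t$ is needed, and the exact rational arithmetic identifying $\mathfrak{b}$.
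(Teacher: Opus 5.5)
Your proposal is correct and follows the paper's proof step by step: you rewrite $H-\kappa L_c$ in the form \eqref{eq:4.12}, apply the normalized cubic projection \eqref{eq:4.13}, complete the square in $\sqrt{w}$ using $e^2=zw$ as in \eqref{eq:4.14}, and bound the diagonal terms by passing to $(M_t)_+$ and using Lemma~\ref{lem:3.4} with $n/(n+2+2nx)\le 1/(1+2x)$ as in \eqref{eq:4.15}, then average. Your explicit arithmetic ($R^2/4=1495729/40401$, $352\cdot201=70752$, $1495729-1202784=292945$) checks out and reproduces $\mathfrak{b}(t)$.
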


\begin{proof}
Expanding \eqref{eq:4.2} and \eqref{eq:4.4} with \eqref{eq:4.6} gives
\begin{equation}
\begin{aligned}
H-\kappa L_c={}&-\frac{17}{3}(\alpha+2\beta)+(\kappa t-2)p+\kappa t(1-c)z+Re-w\\
&+\sum_i\left(16c-\frac{4x_i}{t}\right)d_i^2+t(2t-1)+\frac{t(t+1/3)}{n}.
\end{aligned}
\label{eq:4.12}
\end{equation}
After dividing \eqref{eq:3.19} by $S^3$, Lemma~\ref{lem:3.3} reads
\begin{equation}
\alpha+2\beta\geq\frac{3w}{t}+4\sum_i\frac{d_i^2}{1+2x_i}.
\label{eq:4.13}
\end{equation}
Also, $e^2=zw$ and $R>0$ imply
\begin{equation}
Re-\left(1+\frac{17}{t}\right)w
\leq R\sqrt{zw}-\frac{t+17}{t}w
\leq\frac{tR^2}{4(t+17)}z.
\label{eq:4.14}
\end{equation}
The last inequality follows by completing the square in $\sqrt{w}$ and also holds when $z=0$.

For the remaining diagonal terms, Lemma~\ref{lem:3.4} gives
\begin{equation}
\begin{aligned}
\sum_i M_t(x_i)d_i^2
&\leq\sum_i(M_t(x_i))_+\frac{np}{n+2+2nx_i}\theta_i\\
&\leq p\mathscr{R}(t)\sum_i\theta_i\leq tp\mathscr{R}(t),
\end{aligned}
\label{eq:4.15}
\end{equation}
because $n/(n+2+2nx)\leq1/(1+2x)$. Inserting \eqref{eq:4.13}--\eqref{eq:4.15} into \eqref{eq:4.12} and averaging proves \eqref{eq:4.11}, since exact simplification gives
\[
\kappa t(1-c)+\frac{tR^2}{4(t+17)}
=\frac{t(292945-70752t)}{40401(t+17)}.
\]
\end{proof}

\subsection{Exact coefficient estimates}
We now estimate the coefficients on the whole interval needed for the proof, including values of $t$ below the Suh--Yang threshold.

\begin{lemma}[Coefficient bounds]\label{lem:4.2}
For $0<t\leq26/85$, one has
\begin{equation}
\mathfrak{b}(t)>0,\qquad \mathfrak{a}(t)<2\mathfrak{b}(t),
\label{eq:4.16}
\end{equation}
and
\begin{align}
F_0(t)&:=\mathfrak{b}(t)+t(2t-1)<0,
\label{eq:4.17}\\
F_1(t)&:=t(t+1/3)-2\mathfrak{b}(t)<0.
\label{eq:4.18}
\end{align}
\end{lemma}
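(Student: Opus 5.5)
The plan is to make $\mathscr{R}(t)$ explicit first. After that, each inequality in \eqref{eq:4.16}--\eqref{eq:4.18} becomes a polynomial inequality in $t$ on $(0,26/85]$, which can be checked by monotonicity together with an exact evaluation at the endpoint.

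\emph{Step 1: the positivity of $\mathfrak{b}$.} This is immediate, since $292945-70752t>0$ for $t<1$ and $t+17>0$.

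\emph{Step 2: the inequality \eqref{eq:4.17}.} Multiply $F_0$ by the positive factor $40401(t+17)/t$. The condition becomes
\[
 292945-70752t+40401(2t-1)(t+17)<0,
 \quad\text{that is,}\quad
 80802t^2+1262481t-393872<0 .
\]
The left side is increasing for $t>0$, so it suffices to verify the inequality at $t=26/85$ in exact rational arithmetic. A numerical estimate gives a value of about $-127$ at the endpoint. The margin is thin, so this check must be done exactly; this is presumably where the threshold $26/85$ comes from.

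\emph{Step 3: the inequality \eqref{eq:4.18}.} Divide by $t$. The condition becomes
\[
 t+\tfrac13<\frac{2(292945-70752t)}{40401(t+17)} .
\]
The left side increases and the right side decreases in $t$. Hence it again suffices to compare the two sides at $t=26/85$, where they are about $0.64$ and $0.78$ respectively.

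\emph{Step 4: computing $\mathscr{R}(t)$.} Substitute $y=1+2x\geq1$ and put $\mathsf{A}(t)=16c+2/t$. Then
\[
 \frac{M_t(x)}{1+2x}=g(y):=\frac{\mathsf{A}}{y}-\frac2t-\frac{68}{3y^2}.
\]
Setting $g'(y)=0$ gives the unique critical point $y_*=136/(3\mathsf{A})$, which is a maximum.
\begin{itemize}
\item If $y_*\geq1$, that is $\mathsf{A}\leq136/3$, then $\sup_{y\ge1}g=3\mathsf{A}^2/272-2/t$.
\item Otherwise $g$ is decreasing on $[1,\infty)$, so $\sup_{y\ge1} g=g(1)=16c-68/3<0$. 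In that case $M_t<0$ everywhere and $\mathscr{R}(t)=0$.
\end{itemize}
Since a positive part is taken, in all cases
\[
 \mathscr{R}(t)=\max\Bigl(0,\;\frac{3\mathsf{A}(t)^2}{272}-\frac2t\Bigr)
 \quad\text{for } \mathsf{A}(t)\leq\tfrac{136}{3},
\]
and $\mathscr{R}(t)=0$ otherwise.

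\emph{Step 5: the inequality $\mathfrak{a}<2\mathfrak{b}$.} Split $(0,26/85]$ at the points where the branches of $\mathscr{R}$ change. These points are roots of explicit quadratics in $1/t$.
\begin{itemize}
\item On the branches with $\mathscr{R}=0$, we have $\mathfrak{a}=\kappa t-2<0<2\mathfrak{b}$ for $t<3/8$. This covers all small $t$.
\item On the remaining branch, $t\mathscr{R}(t)=3t(16c+2/t)^2/272-2$ is rational in $t$. Multiplying $2\mathfrak{b}-\mathfrak{a}$ by $t(t+17)$ times a positive constant gives a polynomial of degree about three.
\end{itemize}
The main obstacle is to show this polynomial is positive on the relevant subinterval. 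I would first try to bound it by monotonicity, or to control its derivative sign by a crude estimate, and then evaluate it exactly at the endpoints. If that fails, I would apply Sturm's theorem to the cubic to certify that it has no root in the subinterval.
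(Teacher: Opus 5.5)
Your reduction is correct as far as it goes. Steps 1--4 are right: your formula $\mathscr{R}(t)=\max\bigl(0,\,3\mathsf{A}^2/272-2/t\bigr)$ agrees with the paper's closed form \eqref{eq:4.21} where it is positive, and the $\mathscr{R}=0$ branch is correctly handled by $\kappa t-2<0$.

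The gap is that the inequality requiring real work, $\mathfrak{a}<2\mathfrak{b}$ on the branch where $\mathscr{R}>0$, is only announced, never proved. After clearing denominators it reads $N(t)<0$, where
\[
N(t)=37961856t^3+433519352t^2-142900749t+2060451 .
\]
This is the tightest inequality in the lemma. One has $N(26/85)=-1291682299/614125\approx-2103$, against coefficients of size $10^8$. The larger positive root of $N$ lies only about $1.6\cdot10^{-5}$ above $26/85$.

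So it is this inequality, not \eqref{eq:4.17}, that fixes the threshold. The positive root of $80802t^2+1262481t-393872$ is about $0.30599$. Incidentally, the exact value of that quadratic at $26/85$ is $-1020038/7225\approx-141.2$, not $-127$.

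Your first-choice method also fails on the interval you set up. That interval is $(t_1,26/85]$, where $t_1\approx0.1567$ is the irrational larger root of $1520832t^2-324280t+13467$. On it $N$ is not monotone: $N'(t_1)<0<N'(26/85)$, and $N$ has an interior minimum near $t\approx0.161$. Evaluating ``exactly at the endpoints'' would moreover require arithmetic at the quadratic irrational $t_1$.

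The missing idea is convexity. One has $N''(t)=16(14235696t+54189919)>0$ for $t>0$, so $N$ lies below its chords, and exact negativity at two rational endpoints suffices.

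The paper works on $[1/5,26/85]$, using $N(1/5)=-1109403734/125$. It treats $0<t\le1/5$ separately by the comparison $M_t\le M_{1/5}$. This gives $\mathscr{R}(t)\le\mathscr{R}(1/5)=236107/305252$, hence $\mathfrak{a}(t)<0$ there, and avoids $t_1$ entirely.

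Within your own setup you could instead check $N$ at a rational point below $t_1$, such as $3/20$ (where $N(3/20)\approx-9.5\cdot10^6$), and apply convexity on $[3/20,26/85]$. Sturm's theorem would also work. As written, however, the proposal does not establish $\mathfrak{a}<2\mathfrak{b}$.
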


\begin{proof}
Positivity of $\mathfrak{b}$ is immediate from \eqref{eq:4.10}. To estimate $\mathscr{R}$, put $y=1+2x\geq1$. Before taking the positive part, the expression in \eqref{eq:4.9} becomes
\begin{equation}
\frac{16c+2/t}{y}-\frac2t-\frac{68}{3y^2}.
\label{eq:4.19}
\end{equation}
For $t\geq1/5$, its maximum occurs at
\begin{equation}
y_* =\frac{4556t}{3(712t+67)}>1.
\label{eq:4.20}
\end{equation}
Indeed, its derivative has the sign of $136/3-(16c+2/t)y$, and $y_*>1$ is equivalent to $t>201/2420$. The value at $y_*$ is positive for $t\geq1/5$, and therefore
\begin{equation}
\mathscr{R}(t)=\frac{1520832t^2-324280t+13467}{305252t^2},\qquad t\geq\frac15.
\label{eq:4.21}
\end{equation}
To check positivity, the numerator equals $236107/25$ at $t=1/5$, and its derivative is $3041664t-324280>0$ on this range.

For $0<t\leq1/5$, $M_t(x)\leq M_{1/5}(x)$ for every $x\geq0$. Hence $\mathscr{R}(t)\leq\mathscr{R}(1/5)=236107/305252$, and
\begin{equation}
\begin{aligned}
\mathfrak{a}(t)&\leq\left(\frac{16}{3}+\frac{236107}{305252}\right)t-2\\
&\leq-\frac{3565207}{4578780}<0<2\mathfrak{b}(t).
\end{aligned}
\label{eq:4.22}
\end{equation}
It remains to consider $1/5\leq t\leq26/85$. In this range \eqref{eq:4.21} yields
\begin{equation}
\mathfrak{a}(t)=\frac{9446528t^2-2804352t+40401}{915756t}.
\label{eq:4.23}
\end{equation}
Exact subtraction gives
\begin{equation}
\mathfrak{a}(t)-2\mathfrak{b}(t)=\frac{N(t)}{2747268t(t+17)},
\label{eq:4.24}
\end{equation}
where
\[
N(t)=37961856t^3+433519352t^2-142900749t+2060451.
\]
The polynomial $N$ is strictly convex for $t>0$, since
\[
N''(t)=16(14235696t+54189919)>0.
\]
Its endpoint values are
\begin{equation}
N(1/5)=-\frac{1109403734}{125}<0,\qquad
N(26/85)=-\frac{1291682299}{614125}<0.
\label{eq:4.25}
\end{equation}
A convex function lies below its endpoint chord, so $N<0$ throughout $[1/5,26/85]$. This proves \eqref{eq:4.16}.

Finally,
\begin{align}
F_0(t)&=\frac{t(80802t^2+1262481t-393872)}{40401(t+17)},
\label{eq:4.26}\\
F_1(t)&=\frac{t(40401t^2+841788t-356951)}{40401(t+17)}.
\label{eq:4.27}
\end{align}
Both quadratic numerator factors are strictly increasing for $t\geq0$. At $t=26/85$ their values are, respectively,
\begin{equation}
-\frac{1020038}{7225}<0,\qquad -\frac{691308419}{7225}<0.
\label{eq:4.28}
\end{equation}
They are therefore negative for $0<t\leq26/85$. This proves \eqref{eq:4.17} and \eqref{eq:4.18}.
\end{proof}

\begin{proof}[Proof of Theorem~\ref{thm:1.1}]
Suppose that $n<S\leq85n/59$. Then $0<t\leq26/85$. By Lemmas~\ref{lem:2.3} and~\ref{lem:4.2},
\[
\mathfrak{a}(t)\langle p\rangle_M+\mathfrak{b}(t)\langle z\rangle_M
\leq\mathfrak{b}(t)\langle2p+z\rangle_M
\leq\mathfrak{b}(t)\left(1-\frac2n\right).
\]
Proposition~\ref{prop:4.1} gives the contradiction
\[
0\leq\mathcal{I}\leq\mathfrak{b}(t)\left(1-\frac2n\right)+t(2t-1)+\frac{t(t+1/3)}{n}
=F_0(t)+\frac{F_1(t)}{n}<0.
\]
Thus $t>26/85$, equivalently $S>85n/59$. If $S\leq n$, \eqref{eq:2.4} and constancy of $S$ force $S=0$ or $S=n$. The classification recalled in Section~\ref{sec:1} completes the proof.
\end{proof}

\begin{remark}\label{rem:4.3}
Every constant used in the coefficient argument is rational, and the sign checks are proved on intervals, not inferred from numerical sampling. The value $85/59$ is not asserted to be optimal for this method. In particular, Theorem~\ref{thm:1.1} does not establish \eqref{eq:1.3}.
\end{remark}

\section{The average defect gap}\label{sec:5}

This section applies in every dimension $n\geq 2$. Its proof uses Simons' equation, the
Codazzi symmetry, and the algebra of $\Phi$; it does not use the parameters or integral
inequality of Section~4.

\subsection{Localization near two roots}
Put $a=f_3/S$ and define
\begin{equation}
\pi(s)=s^2-as-\frac{S}{n}=(s-\lambda_+)(s-\lambda_-),
\qquad \lambda_+>\lambda_-.
\label{eq:5.1}
\end{equation}
At a fixed point let $\delta=\lambda_+-\lambda_-$. Then
\begin{equation}
\delta^2=a^2+\frac{4S}{n},\qquad P=\sum_i\pi(\lambda_i)^2.
\label{eq:5.2}
\end{equation}

\begin{lemma}[Root localization]\label{lem:5.1}
If $\sqrt{P}<\delta^2/4$, set
\begin{equation}
\rho=\frac{\delta-\sqrt{\delta^2-4\sqrt{P}}}{2}.
\label{eq:5.3}
\end{equation}
Then $\operatorname{dist}(\lambda_i,\{\lambda_-,\lambda_+\})\leq\rho$ for every $i$.
For any three indices $i,j,k$, including repeated indices, at least one of the three
pair sums satisfies
\begin{equation}
|\lambda_r+\lambda_s-a|^2\geq\delta^2-4\sqrt{P},
\qquad (r,s)\in\{(i,j),(i,k),(j,k)\}.
\label{eq:5.4}
\end{equation}
\end{lemma}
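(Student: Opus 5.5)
The plan is to work one principal curvature at a time and treat $\pi$ as a quadratic in a single variable. For each $i$, \eqref{eq:5.2} gives $\pi(\lambda_i)^2\leq P$, hence $|\pi(\lambda_i)|\leq\sqrt P$.

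\emph{Localization.} Write $\lambda_i=c+u$ with $c=a/2$ the midpoint of the two roots. Then $\pi(\lambda_i)=u^2-\delta^2/4$, so
\[
 \frac{\delta^2}{4}-\sqrt P\leq u^2\leq\frac{\delta^2}{4}+\sqrt P .
\]
Under the hypothesis $\sqrt P<\delta^2/4$ the left side is positive. Hence $|u|\geq\sqrt{\delta^2/4-\sqrt P}=\delta/2-\rho$, where the equality is the definition \eqref{eq:5.3}.

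Now take the nearer root, say $\lambda_+$, so that $u>0$ and the distance is $u-\delta/2$. The lower bound gives $u-\delta/2\geq-\rho$. For the upper bound, use $|\pi(\lambda_i)|=|\lambda_i-\lambda_+|\,|\lambda_i-\lambda_-|$. The second factor equals $\delta+(\lambda_i-\lambda_+)$, so $r:=\lambda_i-\lambda_+$ satisfies $|r|\,|\delta+r|\leq\sqrt P$. The function $r\mapsto|r(\delta+r)|$ is increasing in $|r|$ for $r\geq0$, and for $r\in[-\delta/2,0]$ it equals $|r|(\delta-|r|)$. In both cases $|r|(\delta-|r|)\leq\sqrt P$ with $|r|\leq\delta/2$, which solves to $|r|\leq\rho$ as the smaller root of $s^2-\delta s+\sqrt P=0$. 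When $r>0$ we instead get $r(\delta+r)\leq\sqrt P$, which gives an even smaller bound. The case of the root $\lambda_-$ is symmetric.

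\emph{Pair sums.} By pigeonhole, among $\lambda_i,\lambda_j,\lambda_k$ two lie within $\rho$ of the same root, say $\lambda_r,\lambda_s$ near $\lambda_+$ (the case $\lambda_-$ is symmetric). I initially hoped for the lower bound $\delta-2\rho$, but that is the wrong reading. Since $\lambda_++\lambda_-=a$, we have $\lambda_+-a=-\lambda_-$. Hence
\[
 \lambda_r+\lambda_s-a=(\lambda_r-\lambda_+)+(\lambda_s-\lambda_+)+(\lambda_+-\lambda_-)+\lambda_+ -\ldots
\]
is not a clean expression, and a direct computation of $\lambda_r+\lambda_s-a$ for two values near $\lambda_+$ gives approximately $2\lambda_+-a=\delta$. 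So
\[
 |\lambda_r+\lambda_s-a|\geq\delta-2\rho=\sqrt{\delta^2-4\sqrt P},
\]
and squaring gives exactly \eqref{eq:5.4}. This matches, since $\delta-2\rho=\sqrt{\delta^2-4\sqrt P}$.

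The pigeonhole step covers repeated indices as well, because only the multiset of values matters.

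The main obstacle is the careful one-variable case analysis in the localization step, in particular the sign cases of $r$. Everything else is elementary.
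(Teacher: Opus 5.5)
Your overall strategy is the paper's: bound $|\pi(\lambda_i)|\le\sqrt P$, write $|\pi|=s(\delta\mp s)$ for the distance $s$ to the nearer root and invert using monotonicity on $[0,\delta/2]$, then apply pigeonhole to the three indices. The localization part is correct. The midpoint substitution is a harmless detour, though the distance is $|u-\delta/2|$, not $u-\delta/2$. In fact it already suffices: $u\ge\delta/2-\rho$ settles the case of $\lambda_i$ between the roots, and $u^2\le\delta^2/4+\sqrt P$ settles the outside case.

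The pair-sum step, however, is not proved as written. Saying the sum is ``approximately $2\lambda_+-a=\delta$'' does not yield an inequality. Your remark that $\delta-2\rho$ is ``the wrong reading'' contradicts the bound you then assert. The displayed expansion is wrong only because of the spurious tail $+\lambda_+-\ldots$. The first three terms of your display are already exact: since $\lambda_++\lambda_-=a$ and $\lambda_+-\lambda_-=\delta$,
\[
\lambda_r+\lambda_s-a=\delta+(\lambda_r-\lambda_+)+(\lambda_s-\lambda_+).
\]
Each parenthesis has absolute value at most $\rho$ by the localization. The reverse triangle inequality then gives $|\lambda_r+\lambda_s-a|\ge\delta-2\rho$. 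For the root $\lambda_-$, replace $\delta$ by $-\delta$ and $\lambda_+$ by $\lambda_-$. Squaring is legitimate because $\delta-2\rho=\sqrt{\delta^2-4\sqrt P}\ge0$. With this line in place of the approximation, your proof coincides with the paper's.
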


\begin{proof}
Equation~\eqref{eq:5.2} gives $|\pi(\lambda_i)|\leq\sqrt{P}$. If a real number
$\lambda$ lies between the two roots and $s$ is its distance to the nearer root, then
$0\leq s\leq\delta/2$ and $|\pi(\lambda)|=s(\delta-s)$. This function is increasing
on $[0,\delta/2]$, so $s(\delta-s)\leq\sqrt{P}$ implies $s\leq\rho$. Outside the
root interval, $|\pi(\lambda)|=s(\delta+s)$, which gives an even smaller upper bound
for $s$.

Assign each of three principal curvatures to its nearer root. Two are assigned to the
same root, say $\lambda_\pm$. Since $\lambda_++\lambda_-=a$ and
$|2\lambda_\pm-a|=\delta$, the corresponding pair satisfies
\[
|\lambda_r+\lambda_s-a|\geq\delta-2\rho=\sqrt{\delta^2-4\sqrt{P}}.
\]
This argument also covers repeated indices.
\end{proof}

Define the weighted Codazzi energy
\begin{equation}
\mathcal{T}=\sum_{i,j,k}(\lambda_i+\lambda_j-a)^2h_{ijk}^2.
\label{eq:5.5}
\end{equation}

\begin{proposition}[Three-pair estimate]\label{prop:5.2}
For constant $S>0$,
\begin{equation}
\mathcal{T}\geq\frac{1}{3}\left(a^2+\frac{4S}{n}-4\sqrt{P}\right)_+|\nabla h|^2.
\label{eq:5.6}
\end{equation}
\end{proposition}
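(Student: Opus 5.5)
We prove \eqref{eq:5.6} by symmetrizing the weight in $\mathcal{T}$ over the three index pairs and then invoking Lemma~\ref{lem:5.1} triple by triple.

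\emph{Symmetrization.} Because $h_{ijk}$ is fully symmetric by \eqref{eq:2.2}, relabeling indices leaves $h_{ijk}^2$ unchanged while permuting the three pair sums. Averaging the three expressions for $\mathcal{T}$ therefore gives
\[
 \mathcal{T}=\frac13\sum_{i,j,k}\Bigl[(\lambda_i+\lambda_j-a)^2+(\lambda_i+\lambda_k-a)^2+(\lambda_j+\lambda_k-a)^2\Bigr]h_{ijk}^2 .
\]
The bracket is a sum of nonnegative terms. For every ordered triple, repeated indices included, it is therefore at least the largest of its three summands.

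\emph{Case $\sqrt P<\delta^2/4$.} Fix a triple $(i,j,k)$. Lemma~\ref{lem:5.1} supplies one of its pairs with squared pair sum at least $\delta^2-4\sqrt P$. Hence the bracket is at least $\delta^2-4\sqrt P$. By \eqref{eq:5.2}, $\delta^2=a^2+4S/n$, so
\[
 \delta^2-4\sqrt P=\Bigl(a^2+\frac{4S}{n}-4\sqrt P\Bigr)_+ .
\]
Summing over all triples gives
\[
 \mathcal{T}\ge\frac13\Bigl(a^2+\frac{4S}{n}-4\sqrt P\Bigr)_+\sum_{i,j,k}h_{ijk}^2 ,
\]
and $\sum_{i,j,k}h_{ijk}^2=|\nabla h|^2$. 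This is \eqref{eq:5.6}.

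\emph{Case $\sqrt P\ge\delta^2/4$.} Then $\delta^2-4\sqrt P\le0$, so the positive part on the right of \eqref{eq:5.6} vanishes. The inequality holds trivially because $\mathcal{T}\ge0$.

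\emph{Remarks.} The estimate is pointwise. Constancy of $S$ is not used beyond fixing the notation, since $a$, $P$ and $\delta$ are evaluated at the same point. A principal frame is chosen at that point, as in the conventions of Section~\ref{sec:2}.

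The only step needing care is that Lemma~\ref{lem:5.1} really applies to every ordered triple, including those with repeated indices such as $(i,i,k)$ and $(i,i,i)$. The lemma explicitly covers repeated indices: two of the three entries, possibly equal, are assigned to the same nearer root, and that pair satisfies \eqref{eq:5.4}. With this checked, the symmetrization loses exactly the factor $1/3$ and nothing more.
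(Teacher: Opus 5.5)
Your proposal is correct and matches the paper's proof: you symmetrize the weight over the three index pairs using full symmetry of $h_{ijk}$, apply Lemma~\ref{lem:5.1} to every ordered triple (repeated indices included) when $\sqrt{P}<\delta^2/4$, use \eqref{eq:5.2}, and note that the bound is trivial otherwise. Your side remark that the estimate is pointwise and does not actually use constancy of $S$ (only $S>0$, so that $a$ is defined) is also accurate.
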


\begin{proof}
Symmetry of $h_{ijk}$ gives
\begin{equation}
\begin{split}
\mathcal{T}=\frac{1}{3}\sum_{i,j,k}\bigl[& (\lambda_i+\lambda_j-a)^2
 +(\lambda_i+\lambda_k-a)^2\\
 & +(\lambda_j+\lambda_k-a)^2\bigr]h_{ijk}^2.
\end{split}
\label{eq:5.7}
\end{equation}
If $\sqrt{P}<\delta^2/4$, Lemma~\ref{lem:5.1} bounds each bracket below by
$\delta^2-4\sqrt{P}$. If $\sqrt{P}\geq\delta^2/4$, the claimed lower bound is zero.
Equation~\eqref{eq:5.2} completes the proof.
\end{proof}

\subsection{The Bochner formula with variable cubic trace}

\begin{lemma}[Gradient estimate]\label{lem:5.3}
For constant $S>0$,
\begin{equation}
a_{;k}=\frac{3}{S}\langle\Phi,\nabla_kh\rangle,
\qquad |\nabla a|^2\leq\frac{9nP}{(n+2)S^2}|\nabla h|^2.
\label{eq:5.8}
\end{equation}
\end{lemma}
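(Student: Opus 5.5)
The plan has two parts: an exact formula for $\nabla a$, then a Cauchy--Schwarz estimate restricted to a suitable subspace. Since $S$ is constant, $a_{;k}=(f_3)_{;k}/S=3D_k/S$ by \eqref{eq:2.11}. By \eqref{eq:2.6}, $\sum_i h_{iik}=0$ and $\sum_i\lambda_i h_{iik}=0$, so $D_k=\sum_i\varphi_i h_{iik}=\langle\Phi,\nabla_kh\rangle$. This step works at a principal frame, but the identity $\langle\Phi,\nabla_kh\rangle=\tr(A^2A_{;k})-a\tr(AA_{;k})-\frac{S}{n}\tr A_{;k}$ is frame-free. This gives the first assertion of \eqref{eq:5.8}.

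For the inequality, I reuse the setup of the proof of Lemma~\ref{lem:3.4}. Fix $k$ and work in $E=\{\mathbf 1,\boldsymbol\lambda\}^\perp$. Both $\varphi$ and $v^{(k)}=(h_{11k},\dots,h_{nnk})$ lie in $E$, and $D_k=\langle\varphi,v^{(k)}\rangle$. A plain Cauchy--Schwarz bound $D_k^2\le P\sum_i h_{iik}^2$ would give $\sum_k D_k^2\le P\sum_{i,k}h_{iik}^2$. But summing $h_{iik}^2$ over $i,k$ is only a part of $|\nabla h|^2$, with multiplicities, so this yields only $|\nabla a|^2\le 9P|\nabla h|^2/S^2$. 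The factor $n/(n+2)$ must come from using the diagonal components more efficiently, as in Lemma~\ref{lem:3.4}.

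For the refinement, I apply the first part of \eqref{eq:3.21} directly. In normalized form it reads $D_k^2\le \frac{nP}{n+2+2nx_k}Q_k\le\frac{nP}{n+2}Q_k$, since $x_k\ge0$. Summing over $k$ and using $\sum_kQ_k\le|\nabla h|^2$ (the second half of \eqref{eq:3.21}, whose proof uses only constancy of $S$ and the count of disjoint ordered components) gives $\sum_k D_k^2\le\frac{nP}{n+2}|\nabla h|^2$. Then $|\nabla a|^2=\frac{9}{S^2}\sum_kD_k^2$ yields the claim.

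The step needing care is whether Lemma~\ref{lem:3.4} is available here. It was stated under the standing assumption of Section~\ref{sec:3} that $S>n$. I will check that its proof uses only \eqref{eq:2.6} and Codazzi symmetry, so it holds for any constant $S>0$. The rest is bookkeeping, together with the remark that at points where $\varphi=0$ both sides of \eqref{eq:5.8} vanish.
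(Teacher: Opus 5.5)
Your argument is correct, but it reaches the factor $n/(n+2)$ by a heavier route than the paper. You invoke the weighted Cauchy--Schwarz inequality of Lemma~\ref{lem:3.4}, $D_k^2\le\frac{nP}{n+2+2nx_k}Q_k$, and then drop the weight $2nx_k$. Your check that this lemma needs only \eqref{eq:2.6}, \eqref{eq:2.14}, Codazzi symmetry and $S>0$, and not the standing hypothesis $S>n$ of Section~\ref{sec:3}, is right.

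The paper uses precisely the plain bound you set aside, $D_k^2\le P\sum_i h_{iik}^2$. Your own remark about multiplicities is what makes it work. Put $A_0=\sum_i h_{iii}^2$ and $C_0=\sum_{i\ne k}h_{iik}^2$. Then $|\nabla h|^2\ge A_0+3C_0$. The trace relation $h_{kkk}=-\sum_{i\ne k}h_{iik}$ and Cauchy--Schwarz give $A_0\le(n-1)C_0$. Hence $\sum_{i,k}h_{iik}^2=A_0+C_0\le\frac{n}{n+2}(A_0+3C_0)\le\frac{n}{n+2}|\nabla h|^2$.

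So your claim that plain Cauchy--Schwarz ``yields only'' $9P|\nabla h|^2/S^2$ is inaccurate. The loss lies in the crude bound $\sum_{i,k}h_{iik}^2\le|\nabla h|^2$, not in the Cauchy--Schwarz step, and the constraint $\sum_i h_{iik}=0$ alone removes it.

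The paper's version is elementary and independent of Section~\ref{sec:3}; it uses $\sum_i\lambda_i h_{iik}=0$ only to replace $\lambda_i^2$ by $\varphi_i$. Yours imports the rank-one inverse computation. If you keep the weight, it gives the sharper estimate $|\nabla a|^2\le\frac{9nP}{S^2}\sum_k\frac{Q_k}{n+2+2nx_k}$, which is more than Theorem~\ref{thm:5.6} requires.
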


\begin{proof}
By \eqref{eq:2.11} and \eqref{eq:2.6},
\[
a_{;k}=\frac{3}{S}\sum_i\lambda_i^2h_{iik}
 =\frac{3}{S}\sum_i\varphi_i h_{iik}.
\]
This proves the first identity. To estimate its norm, put
\[
A_0=\sum_i h_{iii}^2,\qquad C_0=\sum_{i\ne k}h_{iik}^2.
\]
For each $k$, the trace relation gives $h_{kkk}=-\sum_{i\ne k}h_{iik}$.
Cauchy--Schwarz therefore gives $A_0\leq(n-1)C_0$. Since
$|\nabla h|^2\geq A_0+3C_0$, it follows that
\begin{equation}
\sum_{i,k}h_{iik}^2=A_0+C_0\leq\frac{n}{n+2}(A_0+3C_0)
 \leq\frac{n}{n+2}|\nabla h|^2.
\label{eq:5.9}
\end{equation}
Applying Cauchy--Schwarz to the displayed formula for $a_{;k}$ and summing over $k$
proves \eqref{eq:5.8}.
\end{proof}

\begin{proposition}[Defect Bochner identity]\label{prop:5.4}
For a minimal hypersurface with constant $S>0$,
\begin{equation}
\frac{1}{2}\Delta P=\mathcal{T}+2(n-S)P
 +2\sum_k\langle\Phi,(\nabla_kA)^2\rangle-S|\nabla a|^2.
\label{eq:5.10}
\end{equation}
Here $(\nabla_kA)^2$ denotes composition of the symmetric endomorphism $\nabla_kA$
with itself. No constancy assumption on $f_3$ is made.
\end{proposition}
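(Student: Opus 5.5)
Since $P=|\Phi|^2$, we have $\frac12\Delta P=|\nabla\Phi|^2+\langle\Phi,\Delta\Phi\rangle$. The plan is to compute both terms directly from $\Phi=A^2-aA-\frac{S}{n}I$, using only Simons' identity \eqref{eq:2.4}, the Codazzi symmetry \eqref{eq:2.2}, the orthogonality relations \eqref{eq:2.14}, and Lemma~\ref{lem:5.3}. Because $S$ is constant, $\nabla(S/n)I=0$, while $a$ is allowed to vary. Hence
\[
\nabla_k\Phi=(\nabla_kA)A+A(\nabla_kA)-a\,\nabla_kA-a_{;k}A .
\]

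For the gradient term, the part without $a_{;k}$ has principal-frame components $(\lambda_i+\lambda_j-a)h_{ijk}$. Its squared norm summed over $k$ is therefore exactly $\mathcal{T}$ from \eqref{eq:5.5}. The cross term with $-a_{;k}A$ equals
\[
-2a_{;k}\sum_{i}(2\lambda_i-a)\lambda_i h_{iik}=-2a_{;k}\bigl(2D_k-a\textstyle\sum_i\lambda_ih_{iik}\bigr)=-4a_{;k}D_k ,
\]
by \eqref{eq:2.6}. Since $D_k=\frac{S}{3}a_{;k}$ by \eqref{eq:2.11}, this is $-\frac{4S}{3}a_{;k}^2$. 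The remaining square term is $a_{;k}^2S$. Summing over $k$ gives
\[
|\nabla\Phi|^2=\mathcal{T}-\tfrac{S}{3}|\nabla a|^2 .
\]

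For the Laplacian term, expand
\[
\Delta\Phi=(\Delta A)A+A(\Delta A)+2\sum_k(\nabla_kA)^2-a\Delta A-2\sum_k a_{;k}\nabla_kA-(\Delta a)A .
\]
By \eqref{eq:2.4}, $\Delta A=(n-S)A$. Pairing with $\Phi$ and using $\langle\Phi,A\rangle=0$ and $\langle\Phi,A^2\rangle=P$ from \eqref{eq:2.14}, the first two terms contribute $2(n-S)P$, while the $a\Delta A$ and $(\Delta a)A$ terms vanish. The curvature-quadratic term gives $2\sum_k\langle\Phi,(\nabla_kA)^2\rangle$. The last term is $-2\sum_ka_{;k}\langle\Phi,\nabla_kA\rangle$, which equals $-\frac{2S}{3}|\nabla a|^2$ by the first identity in \eqref{eq:5.8}.

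Adding the two parts, the gradient terms combine as $-\frac{S}{3}-\frac{2S}{3}=-S$, which gives \eqref{eq:5.10}. The step most likely to go wrong is this bookkeeping of the $\nabla a$ contributions. In particular, the cross term in $|\nabla\Phi|^2$ must be reduced to $D_k$ correctly using both constraints in \eqref{eq:2.6}, so I would recheck that constant carefully. Everything else is a direct consequence of the orthogonality relations and Simons' identity, and no constancy of $f_3$ is used at any point.
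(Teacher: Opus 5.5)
Your proposal is correct and matches the paper's proof essentially step for step. It uses the same splitting $\frac12\Delta P=|\nabla\Phi|^2+\langle\Phi,\Delta\Phi\rangle$, the same reduction of the cross term to $-4\sum_k a_{;k}D_k$ (yielding $|\nabla\Phi|^2=\mathcal{T}-\frac{S}{3}|\nabla a|^2$), and the same use of Simons' identity, \eqref{eq:2.14}, and Lemma~\ref{lem:5.3} to evaluate $\langle\Phi,\Delta\Phi\rangle$. Your constants all check out; note only that the cross-term reduction needs just $\sum_i\lambda_ih_{iik}=0$, not the trace constraint as well.
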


\begin{proof}
Differentiate \eqref{eq:1.5}. At a principal frame,
\begin{equation}
\Phi_{ij;k}=(\lambda_i+\lambda_j-a)h_{ijk}-a_{;k}\lambda_i\delta_{ij}.
\label{eq:5.11}
\end{equation}
Using $\sum_i\lambda_i h_{iik}=0$ and $D_k=Sa_{;k}/3$ in the cross term gives
\begin{equation}
|\nabla\Phi|^2=\mathcal{T}-4\sum_k a_{;k}D_k+S|\nabla a|^2
 =\mathcal{T}-\frac{S}{3}|\nabla a|^2.
\label{eq:5.12}
\end{equation}
Next, the product rule and Simons' equation give
\[
\begin{aligned}
\Delta(A^2)&=2(n-S)A^2+2\sum_k(\nabla_kA)^2,\\
\Delta(aA)&=(\Delta a)A+a(n-S)A+2\sum_k a_{;k}\nabla_kA.
\end{aligned}
\]
After pairing with $\Phi$, the terms proportional to $A$ vanish by
\eqref{eq:2.14}. Lemma~\ref{lem:5.3} then gives
\[
\langle\Phi,\Delta\Phi\rangle=2(n-S)P
 +2\sum_k\langle\Phi,(\nabla_kA)^2\rangle-\frac{2S}{3}|\nabla a|^2.
\]
Adding \eqref{eq:5.12} and using
$\Delta|\Phi|^2/2=|\nabla\Phi|^2+\langle\Phi,\Delta\Phi\rangle$ proves
\eqref{eq:5.10}.
\end{proof}

\begin{remark}\label{rem:5.5}
There is a direct scalar verification of \eqref{eq:5.10}. Applying Lemma~2.1 to
$P=f_4-f_3^2/S-S^2/n$ gives
\begin{equation}
\frac{1}{2}\Delta P=(n-S)\left(2f_4-\frac{3f_3^2}{S}\right)
 +4\mathcal{A}+2\mathcal{B}-6a\mathcal{C}-\frac{9}{S}\sum_kD_k^2.
\label{eq:5.13}
\end{equation}
Expanding the right side of \eqref{eq:5.10} gives the same expression, since
\[
\mathcal{T}=2\mathcal{A}+2\mathcal{B}-4a\mathcal{C}+a^2q,
\qquad \sum_k\langle\Phi,(\nabla_kA)^2\rangle
 =\mathcal{A}-a\mathcal{C}-\frac{S}{n}q.
\]
\end{remark}

For a trace-free symmetric endomorphism with eigenvalues $\varphi_i$,
Cauchy--Schwarz applied to $\varphi_i=-\sum_{j\ne i}\varphi_j$ gives
$\varphi_i^2\leq(n-1)P/n$. Consequently,
\begin{equation}
\lambda_{\min}(\Phi)\geq-c_n\sqrt{P}.
\label{eq:5.14}
\end{equation}
Each $(\nabla_kA)^2$ is positive semidefinite. Therefore
\begin{equation}
\sum_k\langle\Phi,(\nabla_kA)^2\rangle\geq-c_n\sqrt{P}|\nabla h|^2.
\label{eq:5.15}
\end{equation}

\begin{theorem}[Pointwise defect inequality]\label{thm:5.6}
Suppose that $S>n$ is constant, and set
\begin{equation}
\chi=\frac{\sqrt{P}}{S},\qquad \tau=\frac{|f_3|}{S^{3/2}}.
\label{eq:5.16}
\end{equation}
With $c_n,Q_n$ as in \eqref{eq:1.7}, one has
\begin{equation}
\frac{1}{2}\Delta P\geq S^2(S-n)\left[
 \frac{1}{3}\left(\tau^2+\frac{4}{n}-4\chi\right)_+
 -2c_n\chi-Q_n\chi^2\right].
\label{eq:5.17}
\end{equation}
\end{theorem}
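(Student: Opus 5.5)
Start from the defect Bochner identity of Proposition~\ref{prop:5.4} and bound each term on its right side from below by a multiple of $S^2(S-n)$. Recall $|\nabla h|^2=q=S(S-n)$ from \eqref{eq:2.5}. The four terms are handled as follows.

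First, $\mathcal{T}$. Proposition~\ref{prop:5.2} gives
\[
\mathcal{T}\ge\frac13\Bigl(a^2+\frac{4S}{n}-4\sqrt P\Bigr)_+q .
\]
With $a^2=f_3^2/S^2=\tau^2S$ and $\sqrt P=\chi S$, the bracket equals $S(\tau^2+4/n-4\chi)$. This yields the leading positive term $\frac13 S^2(S-n)(\tau^2+4/n-4\chi)_+$.

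Second, the term $2\sum_k\langle\Phi,(\nabla_kA)^2\rangle$. By \eqref{eq:5.15} it is at least $-2c_n\sqrt P\,q=-2c_n\chi S^2(S-n)$, which is the linear term in \eqref{eq:5.17}.

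Third, the gradient term. Lemma~\ref{lem:5.3} gives
\[
-S|\nabla a|^2\ge-\frac{9n}{n+2}\frac{P}{S}q=-\frac{9n}{n+2}\chi^2S^2(S-n).
\]

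Fourth, the zeroth-order term $2(n-S)P=-2\chi^2S^2(S-n)$, which is negative because $S>n$.

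Adding the third and fourth contributions gives the quadratic coefficient
\[
-\Bigl(\frac{9n}{n+2}+2\Bigr)=-\frac{11n+4}{n+2}=-Q_n ,
\]
exactly the constant in \eqref{eq:1.7}. Summing all four bounds therefore produces \eqref{eq:5.17} with the stated constants.

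The only point needing care is that each estimate is pointwise and valid everywhere, including where $P=0$ or where the positive part vanishes. Proposition~\ref{prop:5.2} already covers both cases of the localization hypothesis, and \eqref{eq:5.14}–\eqref{eq:5.15} need no nondegeneracy. I would also check that the double-cover convention does not matter: under reversal of the normal, $\tau$, $\chi$ and $P$ are unchanged, and $a^2$ appears only squared.

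The step I expect to be the main obstacle is confirming that the coefficient bookkeeping matches $Q_n$ exactly. This means the factor $n/(n+2)$ from \eqref{eq:5.9} must be the one carried into Lemma~\ref{lem:5.3}, and $|\nabla h|^2$ must be replaced by $S(S-n)$ consistently, not by an inequality. The rest is substitution.
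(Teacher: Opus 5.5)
Your proposal is correct and is essentially the paper's own proof. Like the paper, you bound the four terms of \eqref{eq:5.10} below using Proposition~\ref{prop:5.2}, \eqref{eq:5.15}, and Lemma~\ref{lem:5.3}, substitute $a^2=S\tau^2$, $P=S^2\chi^2$, $q=S(S-n)$, and add the quadratic coefficients $2+9n/(n+2)=Q_n$, with the bookkeeping exactly as you wrote it.
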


\begin{proof}
Apply Proposition~\ref{prop:5.2}, Lemma~\ref{lem:5.3}, and \eqref{eq:5.15} to
\eqref{eq:5.10}. Since $a^2=S\tau^2$, $P=S^2\chi^2$, and $q=S(S-n)$, the four
terms in \eqref{eq:5.10}, after division by $S^2(S-n)>0$, are bounded below by
\[
\frac{1}{3}\left(\tau^2+\frac{4}{n}-4\chi\right)_+,
\qquad -2\chi^2,\qquad -2c_n\chi,\qquad -\frac{9n}{n+2}\chi^2,
\]
respectively. The quadratic coefficients add to
\[
2+\frac{9n}{n+2}=\frac{11n+4}{n+2}=Q_n.
\]
No derivative of $\chi$ is used, so the inequality remains valid at zeros of $P$.
\end{proof}

\subsection{Integration and the quantitative gap}
For $\tau_0\geq0$, put
\begin{equation}
C_n(\tau_0)=\frac{1}{3}\left(\tau_0^2+\frac{4}{n}\right),
\qquad \gamma_n(\tau_0)=\frac{\sqrt{L_n^2+4Q_nC_n(\tau_0)}-L_n}{2Q_n}.
\label{eq:5.18}
\end{equation}
This is the unique positive solution of
\begin{equation}
Q_n\gamma^2+L_n\gamma=C_n(\tau_0).
\label{eq:5.19}
\end{equation}
In particular, $C_n(0)=C_n$ and $\gamma_n(0)=\gamma_n$.

\begin{theorem}[Gap with a cubic-moment lower bound]\label{thm:5.7}
Let $F:M^n\to\mathbb{S}^{n+1}(1)$, $n\geq2$, be closed, connected, and minimal,
with constant $S>n$. If
\begin{equation}
\inf_M\frac{|f_3|}{S^{3/2}}\geq\tau_0\geq0,
\label{eq:5.20}
\end{equation}
then
\begin{equation}
\left\langle\frac{P}{S^2}\right\rangle_M\geq\gamma_n(\tau_0)^2.
\label{eq:5.21}
\end{equation}
\end{theorem}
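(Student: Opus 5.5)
We integrate the pointwise inequality of Theorem~\ref{thm:5.6} over the closed manifold $M$. Since $\int_M\Delta P\,d\mu=0$ and $S^2(S-n)>0$ is constant, averaging \eqref{eq:5.17} gives
\[
0\geq\left\langle \frac13\Bigl(\tau^2+\frac4n-4\chi\Bigr)_+-2c_n\chi-Q_n\chi^2\right\rangle_M .
\]
Here $\Delta P$ is smooth because $P$ is a polynomial in $f_3,f_4$ with $S$ constant. On the unit normal double cover, if needed, the integrand is invariant, so the identity holds on $M$.

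Next we drop the positive part in the favorable direction. Since $x_+\geq x$ and $\tau\geq\tau_0$ pointwise by \eqref{eq:5.20}, we get
\[
\frac13\Bigl(\tau^2+\frac4n-4\chi\Bigr)_+\geq C_n(\tau_0)-\frac43\chi.
\]
Combined with $L_n=\tfrac43+2c_n$, this yields
\[
Q_n\langle\chi^2\rangle_M+L_n\langle\chi\rangle_M\geq C_n(\tau_0).
\]

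We now pass from this mixed moment inequality to a bound on $\langle\chi^2\rangle_M=\langle P/S^2\rangle_M$. By Cauchy--Schwarz, $\langle\chi\rangle_M\leq\langle\chi^2\rangle_M^{1/2}$. Writing $g=\langle\chi^2\rangle_M^{1/2}\geq0$, we obtain
\[
Q_ng^2+L_ng\geq C_n(\tau_0).
\]
The function $\gamma\mapsto Q_n\gamma^2+L_n\gamma$ is strictly increasing on $[0,\infty)$, and \eqref{eq:5.19} says it takes the value $C_n(\tau_0)$ at $\gamma_n(\tau_0)$. Hence $g\geq\gamma_n(\tau_0)$, and squaring gives \eqref{eq:5.21}.

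The main point requiring care is the direction of the positive-part bound. The term $(\cdot)_+$ enters with a positive coefficient on the lower-bound side, so replacing it by its argument is legitimate. The bound $\tau\geq\tau_0$ must also be applied only inside a term where it helps, and it does, since $\tau^2$ appears with a positive sign. The rest is routine: the inequality in Theorem~\ref{thm:5.6} holds at zeros of $P$, where $\chi$ may fail to be smooth, but $\chi$ is continuous and only its integrals are used. Theorem~\ref{thm:1.2} then follows as the case $\tau_0=0$, and the asymptotics come from $C_n=4/(3n)$, $L_n\to10/3$, and $Q_n\to11$, which give $n\gamma_n\to C_n n/L_n\to2/5$.
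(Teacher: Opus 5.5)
Your proposal is correct and follows the paper's proof essentially step for step. You use $u_+\geq u$ and $\tau\geq\tau_0$ to reduce \eqref{eq:5.17} to \eqref{eq:5.22}, integrate using $\int_M\Delta P\,d\mu=0$, apply Cauchy--Schwarz $\langle\chi\rangle_M\leq\langle\chi^2\rangle_M^{1/2}$, and conclude by monotonicity of $Q_n y^2+L_n y$; the only difference is that you integrate before dropping the positive part, which is immaterial.
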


\begin{proof}
Since $u_+\geq u$ for every real $u$, Theorem~\ref{thm:5.6} and \eqref{eq:5.20}
imply
\begin{equation}
\frac{1}{2}\Delta P\geq S^2(S-n)\bigl(C_n(\tau_0)-L_n\chi-Q_n\chi^2\bigr).
\label{eq:5.22}
\end{equation}
Integrate over $M$ and use $\int_M\Delta P\,d\mu=0$. With
$y=\sqrt{\langle\chi^2\rangle_M}$, Cauchy--Schwarz gives
$\langle\chi\rangle_M\leq y$, hence
\[
0\geq C_n(\tau_0)-L_n\langle\chi\rangle_M-Q_n\langle\chi^2\rangle_M
 \geq C_n(\tau_0)-L_ny-Q_ny^2.
\]
The function $Q_ny^2+L_ny$ is strictly increasing for $y\geq0$. By
\eqref{eq:5.19}, $y\geq\gamma_n(\tau_0)$, which proves \eqref{eq:5.21}.
\end{proof}

\begin{remark}[A scalar comparison]\label{rem:5.8}
The same integration can be expressed as a pointwise comparison. For
$\gamma=\gamma_n(\tau_0)$ and $K=Q_n+L_n/(2\gamma)$, one has for every $x\geq0$
\begin{equation}
\frac{1}{3}\left(\tau_0^2+\frac{4}{n}-4x\right)_+-2c_nx-Q_nx^2
 \geq K(\gamma^2-x^2).
\label{eq:5.23}
\end{equation}
Indeed, the left side is at least $C_n(\tau_0)-L_nx-Q_nx^2$, and the difference
between this quadratic and the right side is exactly $L_n(x-\gamma)^2/(2\gamma)$.
Thus no separate argument at the transition of the positive part is needed.
\end{remark}

\begin{proof}[Proof of Theorem~\ref{thm:1.2}]
Take $\tau_0=0$ in Theorem~\ref{thm:5.7}. Positivity of $\gamma_n$ follows from
$C_n>0$, $L_n>0$, and $Q_n>0$. For the asymptotic statement, \eqref{eq:5.19} gives
\[
\gamma_n=\frac{4/(3n)}{L_n+Q_n\gamma_n}.
\]
In particular, $0<\gamma_n\leq4/(3nL_n)$, so $\gamma_n\to0$. Since
$L_n\to10/3$ and $Q_n\to11$, it follows that $n\gamma_n\to2/5$, and therefore
$n^2\gamma_n^2\to4/25$.
\end{proof}

\begin{corollary}[Average pairwise energy]\label{cor:5.9}
Under the hypotheses of Theorem~\ref{thm:5.7},
\begin{equation}
\langle G\rangle_M\geq2S^3\gamma_n(\tau_0)^2+\frac{2S}{n}(S-n)^2.
\label{eq:5.24}
\end{equation}
\end{corollary}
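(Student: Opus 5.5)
The plan is to average the pairwise identity of Lemma~\ref{lem:2.2} over $M$ and then insert the lower bound of Theorem~\ref{thm:5.7}. Lemma~\ref{lem:2.2} holds at every point because $S>0$:
\[
G=2SP+\frac{2S}{n}(S-n)^2.
\]
Since $S$ is constant by hypothesis, the second term is a constant. Averaging therefore gives
\[
\langle G\rangle_M=2S\langle P\rangle_M+\frac{2S}{n}(S-n)^2.
\]

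Next, write $2S\langle P\rangle_M=2S^3\langle P/S^2\rangle_M$, which again uses only that $S$ is constant. The hypotheses of Corollary~\ref{cor:5.9} coincide with those of Theorem~\ref{thm:5.7}, so \eqref{eq:5.21} applies and gives $\langle P/S^2\rangle_M\geq\gamma_n(\tau_0)^2$. Multiplying by the positive constant $2S^3$ and substituting into the averaged identity yields \eqref{eq:5.24}.

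Orientability causes no difficulty. $G$ is a symmetric function of the principal curvatures that is even under $A\mapsto -A$, and $P$ is likewise unchanged by reversing the normal, as noted in Section~\ref{sec:2}. Hence both sides of \eqref{eq:5.24} are well defined on $M$ itself. The argument is a direct substitution with no real obstacle; the only point to check is that the scalar term is constant and can be taken out of the average, which follows from the constancy of $S$.
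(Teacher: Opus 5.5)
Your proposal is correct and follows the paper's proof: you average the pairwise identity \eqref{eq:2.16}, use the constancy of $S$ to pull out $2S^3$ and the scalar term, and apply \eqref{eq:5.21}. Your remark that $G$ and $P$ are invariant under reversing the normal is a harmless extra check, consistent with the conventions of Section~\ref{sec:2}.
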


\begin{proof}
Average \eqref{eq:2.16}, use that $S$ is constant, and apply \eqref{eq:5.21}.
\end{proof}

For each fixed dimension, Theorem~\ref{thm:1.2} excludes a sequence of constant-$S$
minimal hypersurfaces with $S>n$ for which $\langle P/S^2\rangle_M$ tends to zero.
The dimension must be fixed: the lower bound satisfies $\gamma_n^2\sim4/(25n^2)$.

\end{document}